\definecolor{refkey}{gray}{.75}
\newtheorem{theorem}{Theorem}[section]
\newtheorem{lemma}[theorem]{Lemma}
\theoremstyle{definition}
\def\p{\partial}
\def\tilde{\widetilde}
\def\bs{\boldsymbol}
\def\xb{\bs{x}}
\def\ab{\bs{a}}
\def\nb{\mathbf{n}}
\def\ub{\bs{u}}
\def\vb{\bs{v}}
\def\qb{\bs{q}}
\def\psib{\bs{\psi}}
\def\lb{\bs{l}}
\def\phib{\bs{\varphi}}
\def\fb{\bs{f}}
\def\hb{\bs{h}}
\def\gammab{\bs{\gamma}}
\def\C{\mathcal{C}}
\def\D{\mathcal{D}}
\def\M{\mathcal{M}}
\def\Mbar{\overline{\mathcal{M}}}
\def\lpm{{L^p(\M)}}
\def\linfm{L^\infty{\M}}
\def\l2m{L^2(\M)}
\def\w1pm{W^{1,p}(\M)}
\def\h10m{H^1_0(\M)}
\title[Multi-layer QG] 
      {On the well-posedness of the inviscid multi-layer
        quasi-geostrophic equations}
\author[Qingshan Chen]{}
\subjclass{Primary: 35D30, 76B03; Secondary: 86A05, 86A10.}
 \keywords{quasi-geostrophic equations, inviscid, well-posedness,
   geophysics, fluid mechanics.}
 \email{qsc@clemson.edu}
\thanks{The first author is supported by Simons Foundation (xx-xxxx)}
\thanks{$^*$ Corresponding author: Qingshan Chen}
\begin{document}
\maketitle

\centerline{\scshape Qingshan Chen $^*$}
\medskip
{\footnotesize
 \centerline{Department of Mathematical Sciences}
   \centerline{Clemson University}
   \centerline{ Clemson, SC 29631, USA}
} 

\bigskip

 \centerline{(Communicated by the associate editor name)}

\begin{abstract}
The inviscid multi-layer quasi-geostrophic equations are considered
over an arbitrary bounded domain. The no-flux but non-homogeneous
boundary conditions are imposed to accommodate the free fluctuations
of the top and layer interfaces. Using the barotropic and baroclinic
modes in the vertical direction,  the  elliptic system 
governing the streamfunctions and the potential vorticity is
decomposed into a sequence of scalar elliptic boundary value problems,
where the regularity theories from the two-dimensional case can be
applied. With the initial potential vorticity being essentially
bounded, the multi-layer quasi-equations are then shown to be globally
well-posed, and the initial and boundary conditions are satisfied in
the classical sense. 
\end{abstract}

\section{Introduction}
At the mid-to-high latitudes, where the Coriolis parameter is away
from zero, large-scale geophysical flows, namely the ocean and
atmosphere, evolve around the so-called geostrophic balance, where the
Coriolis force approximately counteract the horizontal pressure
gradient. To the leading order, the flow is governed by the
quasi-geostrophic (QG) equations. The QG equations take the form of a
transport equation,
\begin{equation}
  \label{eq:intro1}
  \dfrac{\p}{\p t} q + \ub\cdot\nabla q = F,
\end{equation}
where $q$ represents the QG potential vorticity (PV), $\ub$ the
horizontal velocity field, and $F$ on the right-hand side is a place
holder for other terms in the dynamics, such as the external forcing,
the diffusion, etc. In the QG, the velocity $\ub$ can be derived from
the QGPV $q$, and therefore, the QG, together with the suitable
initial and boundary conditions, can be viewed as a closed system
about a single quantity, the QGPV $q$. This simple and yet
sophisticated model provides a unified framework for studying both the
ocean and atmosphere (\cite{Pedlosky1987-gk, Vallis2017-jj,
  Lorenz2006-zx}).  

Depending on the assumption on the vertical density profile, the QG
equation(s) can take the form of a single scalar two-dimensional
equation (the barotropic case with a uniform density profile), a
system of two-dimensional scalar equations (the multi-layer case with
a non-uniform discrete density profile), or a three-dimensional scalar
equation (the 3D case with a non-uniform but continuous density
profile). The QG equations form a hierarchy of models, with increasing
complexity, for the large-scale geophysical flows. As a reference
regarding the complexity, the barotropic QG is on the same level as
the two-dimensional incompressible Euler equations; it is only a step
forward from the latter with the inclusion of a free surface on the
top. While the three-dimensional QG equation is posed on a
three-dimensional spatial domain, the velocity vector at every point
is horizontal, and therefore two-dimensional. Thus, the
three-dimensional QG is simpler than the three-dimensional
incompressible Euler equations, and hopefully, more amenable as well. 

Several authors have
studied the three-dimensional QG equation under idealized settings, in
the unbounded half space, or a rectangular box. An early work
is by Dutton (\cite{Dutton1974-oo, Dutton1976-kh}), who considered the
three-dimensional QG model in a rectangular box with periodic boundary
conditions on the sides, and homogeneous Neumann boundary conditions
on the top and bottom. The uniqueness of a classical solution, if it
exists, and the global existence of a generalized solution were
established. Bourgeois and Beale (\cite{Bourgeois1994-tv}) studied the
equation in a similar setting, and the existence of a global strong
solution was proved. Desjardins and Greneier
(\cite{Desjardins1998-hb}) also considered the equation in a similar
setting, but included in their model the Ekman pumping effect which
effectively add diffusion to the flow. The existence of a global weak
solution is given. Puel and Vasseur (\cite{Puel2015-mw}) considered
the inviscid QG in the upper half space, with the non-penetration
boundary condition at the bottom of the fluid. The global existence of
a weak solution was proven.  In these works, the issue of uniqueness
of the solutions was left open. In a recent work, Novack and Vasseur
(\cite{Novack2016-op}) considered the three-dimensional QG in the same
spatial setting as in \cite{Puel2015-mw}, but with an added diffusion
term in the boundary at $z=0$ due to the Ekman pumping effect. The
existence and uniqueness of a global strong solution is proven. 
Novack (\cite{Novack2017-dc}) studies the existence of a weak solution
to the inviscid 3D QG equation, with initial data in the Lebesgue
spaces. The present work focuses on simpler models, but on more
general settings, namely purely inviscid models on arbitrary bounded
domains with physically relevant boundary conditions. 

Th well-posedness of the barotropic QG equation with a free top
surface is the subject of a previous work (\cite{Chen2017-fh}). The
goal of the 
present work is to address the issue of well-posedness for the
multi-layer QG equations. 
Within the multi-layer QG
equations, each layer behaves like a barotropic QG, and the layers
interact with each other through pressure. Because of these
interactions, the well-posedness of the barotropic QG does not
directly transfer over to the multi-layer case. The layer interactions
make the problem more interesting and more challenging at the same
time.

A major challenge in the previous work (\cite{Chen2017-fh}) is the
non-homogeneous boundary conditions on the streamfunction, which is
imposed to accommodate the free fluctuations of the top
surface. There, the challenge is dealt with by the superposition rule
and an estimate on the constant non-zero value of the
streamfunction. In this work, not only is the top surface left free,
but also are the interior interfaces between layers. It turns out that the
interior interfaces behave like the top surface, and can be treated
as such. Therefore, the same type of boundary conditions are imposed
on the interior interfaces, and they are treated in exactly the same
way as in \cite{Chen2017-fh}. 

For both the ocean and the atmosphere, the density of the fluid is
non-uniform, which is the basis for the multi-layer or
three-dimensional models. Not only so, the rate at which density
varies against the height (or depth for the ocean) is also
non-uniform. For example, in the ocean, the density of the water
increases rapidly downward for the first couple of hundred meters, and
then stay almost flat for the next thousands of meters
(\cite{Pedlosky1987-gk, Talley2011-ak}). Because
of this non-uniform changing rate, the vertical interaction between
layers takes the form of a second-order derivative with a non-uniform
coefficient, in the continuous case,
\begin{equation*}
  \dfrac{\p}{\p z}\left(\dfrac{1}{S(z)}\dfrac{\p\psi}{\p z}\right),
\end{equation*}
where $\psi$ stands for the streamfunction, and $S(z)>0$ is determined
by the vertical density profile. Under 
the usual homogeneous Neumann boundary conditions for $\psi$, this
operator is self-adjoint. In the multi-layer case, the non-uniformity
in the changing rate gives rise to a non-symmetric matrix with
non-positive eigenvalues, and the layer interactions are represented
by a matrix-vector product,
\begin{equation*}
  L\psib,
\end{equation*}
where $\psib$ is a vector-valued function representing the
streamfunction across the layers, and $L$ is non-symmetric coefficient
matrix. 
Because of its non-symmetry, even though $L$ has only non-negative
eigenvalues, the inner product
\begin{equation*}
  (L\psib,\,\psib),
\end{equation*}
which appears in the analysis of the elliptic boundary value problem
governing the streamfunction $\psib$ and the QGPV $\qb$, is not
negative definite. This lack of definiteness is not fatal for the
analysis, as it can be remedied by a decomposition in the eigenmodes
of the associated Sturm-Liouville problem in the vertical
direction. The major hurdle, as it turns out, is related to another
inner product involving a time derivative. Due to the non-symmetry of
$L$, the inner product
\begin{equation*}
  \left(\dfrac{\p}{\p t}(L\psib),\,\psib\right)
\end{equation*}
is no longer exactly integrable in time. To circumvent this
difficulty imposed by the physical reality, we assume, in this study,
that the density profile is linear with respect to the height, and the
coefficient matrix $L$ in the multi-layer case is actually
symmetric. Of course, as pointed out above, this assumption runs
against the physical reality. But this assumption does not
significantly compromise the mathematical generality of the problem,
because layer 
interactions are still included in the model. We also note that 
the corresponding differential operator in the continuous
three-dimensional case 
is actually self-adjoint, which is the analogue of the symmetry
of the discrete operator, and thus the current work can still serve as
a stepping stone to the three-dimensional problem. 

The rest of the paper is organized as follows. Section \ref{s:ibvp}
presents the initial-boundary value problem for the multi-layer QGs in
its complete form. Section \ref{s:bvp} deals with an elliptic boundary
value problem associated with the multi-layer QG. In Section
\ref{s:weak}, a weak formulation and some {\itshape a priori} results
are obtained. Section \ref{s:unique} is devoted to the uniqueness of
the weak solution, and Section \ref{s:exist} to the existence of this
solution. The paper concludes in Section \ref{s:conclude}.

\section{The initial and boundary conditions}\label{s:ibvp} 

We consider a 3-layer QG system,
\begin{equation}
  \label{eq:1}
  \left\{
    \begin{aligned}
      &\left(\dfrac{\p}{\p t} + \ub_1\cdot\nabla\right)\left(\zeta_1 +
        \beta y + F_1^2 (-\psi_1 + \psi_2)\right) = f_1,\\
      &\left(\dfrac{\p}{\p t} + \ub_2\cdot\nabla\right)\left(\zeta_2 +
        \beta y + F_2^2 (\psi_1 -2 \psi_2 + \psi_3)\right) = f_2,\\
      &\left(\dfrac{\p}{\p t} + \ub_3\cdot\nabla\right)\left(\zeta_3 +
        \beta y + F_3^2 (\psi_2 - \psi_3)\right) = f_3.
    \end{aligned}\right.
\end{equation}
In the above, for each $i = 1,\,2,\,3$,
\begin{align*}
  &\psi_i  & &\textrm{Pressure perturbation},\\
  &\ub_i = \nabla^\perp \psi_i,  & &\textrm{Horizontal velocity,}\\
  &\zeta_i = \nabla\times\ub_i, & &\textrm{Relative vorticity,}\\
  & F_i \equiv \dfrac{L}{\sqrt{g'D_i}/f_0}, & &\textrm{Froude number.}
\end{align*}
In the specification for the Froude number, $L$ represents the typical
horizontal length scale of the flow, $D_i$ the average layer depth,
and $g'$ is the reduced gravity within the flow.  

This study focuses on the effect of the nonlinearity within each
layer, as well as the interaction between the layers. For this reason,
the diffusion terms have been omitted. 

In reality, the Froude number $F_i = O(1)$. We therefore take $F_i =
1$ in the equations \eqref{eq:1}. This choice has the added benefit
that the coefficient matrix for the zeroth order terms is now
symmetric, the significance of which has been discussed in the
Introduction. The beta terms are mathematically insignificant, and
therefore they will be neglected from now on. Thus we consider the
following model, 
\begin{equation}
  \label{eq:2}
  \left\{
    \begin{aligned}
      &\left(\dfrac{\p}{\p t} + \ub_1\cdot\nabla\right)\left(\zeta_1 +
        (-\psi_1 + \psi_2)\right) = f_1,\\
      &\left(\dfrac{\p}{\p t} + \ub_2\cdot\nabla\right)\left(\zeta_2 +
        (\psi_1 -2 \psi_2 + \psi_3)\right) = f_2,\\
      &\left(\dfrac{\p}{\p t} + \ub_3\cdot\nabla\right)\left(\zeta_3 +
        (\psi_2 - \psi_3)\right) = f_3.
    \end{aligned}\right.
\end{equation}
The variables $\psi_i$, $\ub_i$, and $\zeta_i$ are defined as before. 

For this inviscid system, the no-flux boundary conditions are imposed
on the velocity field, and in terms of the streamfunctions, these
conditions can be written as 
\begin{equation}
  \label{eq:3}
  \psi_i = \textrm{constant} \qquad \textrm{for each } 1\le i\le
  3\textrm{ on } \p\Omega.
\end{equation}
In order to uniquely determine the streamfunctions, a mass conservation
constraint is imposed on each layer, 
\begin{equation}
  \label{eq:4}
  \int_\Omega \psi_i(x,t) dx = 0.
\end{equation}
Finally, the initial conditions are imposed on the streamfunctions as
well, 
\begin{equation}
  \label{eq:5}
  \psi_i(x,0) = \psib^0_i(x),\qquad\textrm{for each } 1\leq i\leq
  3\textrm{ and } \forall x\in\Omega.
\end{equation}

To present the analysis in a concise fashion, it is advisable
to introduce some vector notations and rewrite the system in a vector
format. We let 
\begin{align*}
  \qb &= (q_1,\,q_2,\,q_3)^T,\\
  \psib &= (\psi_1,\,\psi_2,\, \psi_3)^T,\\
  \phib &= (\phi_1,\,\phi_2,\, \phi_3)^T,\\
  \fb &= (f_1,\, f_2,\, f_3)^T,\\
  U &= (\ub_1,\,\ub_2,\, \ub_3)^T.
\end{align*}
The first four in the above are column vectors, while the last one
stands for a $3\times 2$ tensor, because each $\ub_i$ represents a
vector in the horizontal direction.
We designates the coefficients matrix for the zeroth order terms by  
\begin{equation*}
  L = \left(
    \begin{matrix}
      -1 & 1 & 0\\
      1 & -2 & 1\\
      0 & 1 & -1
    \end{matrix}\right).
\end{equation*}

Then the multilayer QG equations \eqref{eq:2} can be succinctly
written in the form 
of a transport equation,
\begin{equation}
  \label{eq:6}
  \dfrac{\p}{\p t}\qb + U\cdot\nabla\qb = \fb,
\end{equation}
with 
\begin{subequations}
  \label{eq:7}
  \begin{align}
    \qb &= \Delta\psib + L\psib,\label{eq:7a}\\
    U &= \nabla^\perp\psib.
  \end{align}
\end{subequations}
The boundary conditions \eqref{eq:3} and \eqref{eq:4} and the initial
conditions can also be
recast in the vector variables,
\begin{subequations}
  \label{eq:8}
  \begin{align}
    &\psib(x,t) = \bs{l}(t),\qquad \forall x\in\p\M,\label{eq:8a}\\
    &\int_\M \psib(x,t) dx = 0,
  \end{align}
\end{subequations}
and
\begin{equation}
  \label{eq:9}
  \psib(x,0) = \psib_0(x),\qquad\forall x\in\M.
\end{equation}

\section{A non-standard elliptic boundary value problem}\label{s:bvp}
When the potential vorticity $\qb$ is known, the streamfunction
$\psib$ can be recovered by solving an elliptic boundary value
problem,
\begin{equation}
  \label{eq:10}
  \left\{
    \begin{aligned}
      \Delta\psib + L\psib &= \qb, &  &x\in\M,\\
      \psib(x) &= \bs{l}, & &x\in\p\M,\\
      \int_\M \psib(x) dx &= 0. & &
    \end{aligned}\right.
\end{equation}

The boundary conditions are of a non-standard type. 
The scalar version of \eqref{eq:10} has been dealt with in REF, with
the aid of the Green's function for the Helmholtz equation. Our
strategy for the system \eqref{eq:10} is to transform and decouple it
into a sequence of scalar elliptic boundary value problems. 
We note that the coefficient matrix $L$ is symmetric, and therefore
can be diagonalized. It has a set of non-positive eigenvalues
$\{\lambda_1,\,\lambda_2,\,\lambda_3\} = \{0, \,-1,\, -3\}$, and a
corresponding set
of distinct orthogonal eigenvectors,
\begin{align*}
  &\vb_0 = \left(
    \begin{aligned}
      \dfrac{1}{\sqrt{3}}\\\dfrac{1}{\sqrt{3}}\\\dfrac{1}{\sqrt{3}}
    \end{aligned}\right), &
  &\vb_1 = \left(
    \begin{aligned}
      \dfrac{1}{\sqrt{2}}\\0\\-\dfrac{1}{\sqrt{2}}
    \end{aligned}\right), &
  &\vb_2 = \left(
    \begin{aligned}
      -\dfrac{1}{\sqrt{6}}\\\dfrac{2}{\sqrt{6}}\\-\dfrac{1}{\sqrt{6}}
    \end{aligned}\right), &
\end{align*}
corresponding to the barotropic mode, and the first and the second
baroclinic modes in the vertical direction, respectively.
Using these eigen-modes as a basis, we can transform the BVP
\eqref{eq:10} into a decoupled system. Specifically, 
We let 
\begin{equation*}
  P = [\vb_1,\, \vb_2,\, \vb_3],
\end{equation*}
and 
\begin{equation*}
  \psib = P\tilde\psib. 
\end{equation*}
For each $i=1,2,3$, the $\tilde\psi_i$ solves the boundary value
problem 
\begin{equation}\label{eq:10a}
\left\{
  \begin{aligned}
    \Delta\tilde\psi_i + \lambda_i\tilde\psi_i &= \tilde q_i, &
    &\xb\in\M,\\ 
    \tilde\psi_i &= \tilde l_i, & &\xb \in\p\M,\\
    \int_\M \psi_i(\xb) dx &= 0. & &
  \end{aligned}\right.
\end{equation}
In the above, $\tilde q_i$ and $\tilde l_i$ are obtained,
respectively, from  
 \begin{align*}
   &\tilde\qb = P^{-1} \qb, \qquad\quad\tilde{\bs{l}} = P^{-1} \bs{l}.
 \end{align*}

For  $i=2,\,3$, the non-standard scalar elliptic BVP with
a zero-order term has been
considered in \cite{Chen2017-fh}. There, with the Green's function for
the Helmholtz equation, it is shown that the constant boundary value
$|l_i|$ on the boundary can be bounded in terms of $|q_i|_\infty$, and
the solution $\psi_i$ belongs to $W^{2,p}(\M)$ for any $p > 1$, and
is H\"older and quasi-Lipschitz continuous. The case with $i=0$ in
\eqref{eq:10a} can be
handled in a similar fashion,  with the Green's function for the
Laplace operator, and the same regularity results can be
obtained. These regularity results can then be transferred to the
solution $\psib$ of \eqref{eq:10} via the transformation $P^{-1}$. 

Below, we shall formally state the regularity results for the elliptic
boundary value problem \eqref{eq:10}. But, in order to do so, we need
to first give the precise definitions of some relevant function
spaces. 

We denote by $Q_T$ the spatial-temporal domain,
\begin{equation*}
  Q_T = \M\times (0,T).
\end{equation*}
We denote by $L^\infty(\M)$, or $L^\infty(Q_T)$ when time is also
involved, the space of functions that are essentially bounded. 
We denote by $C^{0,\gamma}(\overline\M)$, with $\gamma>0$, the space of
H\"older-continuous functions on $\M$, and similarly,
$C^{0,\gamma}(\overline{Q_T})$ on $Q_T$. $C^{0,\gamma}(\overline\M)$ and
$C^{0,\gamma}(\overline{Q_T})$ are both Banach spaces under the usual H\"older
norms. 

\begin{lemma}\label{lem:elliptic-reg}
  Let $\p\M\in C^2$, $\qb\in L^\infty(\M)$. Then the elliptic boundary
  value problem \eqref{eq:10} has a unique solution $\psib\in
  W^{2,p}(\M)$ for every $p > 1$, with the following estimate,
    \begin{align}
      \|\nabla^2\psib\|_{L^p(\M)} &\leq C p \| \qb\|_{L^\infty(\M)},
      & & \forall\, p > 1.\label{eq:reg1}
\end{align}
In addition, the first derivatives of $\psib$ are H\"older 
continuous and quasi-Lipschitz continuous,
 \begin{align}
       \|\nabla\psib\|_{C^{0,\gamma}(\M)} &\leq \dfrac{C}{1-\gamma} \| \qb\|_{L^\infty(\M)},
       & & \forall\, 0<\gamma< 1,\label{eq:hold}\\
       |\nabla\psib(\xi) - \nabla\psib(\eta)|  &\leq C \chi(\delta) \| \qb\|_{L^\infty(\M)},
       & & \forall\,\xi,\,\eta\in\M.\label{eq:qlip}
     \end{align}
In the above,
\begin{equation*}
  \chi(\delta) = \left\{
    \begin{aligned}
      &(1-\ln\delta)\delta & &\textrm{if } \delta < 1,\\
      &1 & &\textrm{if } \delta \ge 1.
    \end{aligned}\right.
\end{equation*}
\end{lemma}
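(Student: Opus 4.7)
The plan is to carry out the diagonalization program outlined just before the statement and reduce the system \eqref{eq:10} to the three decoupled scalar BVPs \eqref{eq:10a}. Since $L$ is symmetric with three distinct eigenvalues, I would take the transition matrix $P=[\vb_0,\vb_1,\vb_2]$ to be orthogonal, so that $P^{-1}=P^T$ and $|\phib|=|P^{-1}\phib|$ pointwise for any vector-valued $\phib$. This pointwise identity makes the transfer between $(\qb,\psib)$ and $(\tilde\qb,\tilde\psib)$ transparent in every $L^p$, $W^{2,p}$, and H\"older norm, up to a constant depending only on $P$. Thus the whole lemma will follow once the three scalar estimates are established modewise and reassembled.

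For the two negative eigenvalues $\lambda_i\in\{-1,-3\}$, the BVP \eqref{eq:10a} is exactly the non-standard Helmholtz problem treated in \cite{Chen2017-fh}, and I would import that result directly. A brief reminder of its structure: one writes $\tilde\psi_i=\varphi_i+\tilde l_i\, w_i$, where $\varphi_i$ solves the equation with homogeneous Dirichlet data, $w_i$ solves the homogeneous equation with boundary value $1$, and the unknown constant $\tilde l_i$ is pinned down by the zero-mean constraint $\int_\M\tilde\psi_i\,\ud x=0$. The analysis of \cite{Chen2017-fh} yields both $|\tilde l_i|\le C\|\tilde q_i\|_{L^\infty(\M)}$ and the Calder\'on--Zygmund type bound with the explicit linear dependence on $p$; the H\"older and quasi-Lipschitz controls on $\nabla\tilde\psi_i$ are then consequences of this bound via a Morrey-type embedding optimized in $p$.

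For the barotropic mode $\lambda_0=0$ the equation collapses to a Poisson problem, and the argument is in fact easier because a constant may be added to any solution without disturbing the equation. I would first solve $\Delta\varphi_0=\tilde q_0$ with homogeneous Dirichlet data via the Dirichlet Green's function of $\M$, then set $\tilde\psi_0=\varphi_0-|\M|^{-1}\int_\M\varphi_0\,\ud x$ and read off $\tilde l_0$ as minus the mean of $\varphi_0$. The classical Calder\'on--Zygmund estimate $\|\nabla^2\varphi_0\|_{L^p(\M)}\le Cp\,\|\tilde q_0\|_{L^\infty(\M)}$ on a $C^2$ domain and its consequences on first derivatives then reproduce \eqref{eq:reg1}--\eqref{eq:qlip} for this mode.

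The final step is to combine the three modal estimates and transfer them back to $\psib=P\tilde\psib$, using orthogonality of $P$ to absorb the transformation into the generic constant $C$. I do not anticipate any serious analytic obstacle; the subtlest point is bookkeeping, namely checking that the linear-in-$p$ factor in \eqref{eq:reg1} survives both the reduction and the reassembly, and confirming that $\int_\M w_i\,\ud x\neq 0$ for the two Helmholtz modes so that the constant $\tilde l_i$ is genuinely determined by the mean-zero condition. Both points are addressed in \cite{Chen2017-fh} under the same sign convention $\lambda_i<0$ used here, so the transposition to the present setting is essentially a verification.
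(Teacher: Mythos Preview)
Your proposal is correct and follows essentially the same route as the paper: diagonalize $L$ by the orthogonal eigenbasis, invoke \cite{Chen2017-fh} for the two baroclinic (Helmholtz) modes, handle the barotropic Poisson mode by the Green's function for the Laplacian, and transfer the scalar estimates back through $P$. The paper's own argument, given in the paragraphs immediately preceding the lemma rather than in a formal proof block, is precisely this outline, so there is no substantive difference to report.
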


We denote by $V$ the space of solutions to the elliptic boundary value
problem \eqref{eq:10} with $\qb\in L^\infty(\M)$, i.e., 
\begin{equation*}
  V := \left\{ \psib\,|\, \psib \textrm{ solves } \eqref{eq:10} \textrm{
      for some }  \qb\in L^\infty(\M)\right\}.
\end{equation*}
The space $V$ is equipped with the norm
\begin{equation*}
  \| \psib\| _V := \| \Delta\psib + L\psib\|_{L^\infty(\M)}.
\end{equation*}
By the continuity of the inverse elliptic operator $(\Delta + L)^{-1}$,
$V$ is a Banach space.

In the analysis, we will also encounter functions that are
differentiable with continuous first derivatives. The space of these
functions will be denoted as $C^1(\overline\M)$, equipped with the usual $C^1$
norm. 

When time is involved, we use $L^\infty(0,T; V)$ to designate the
space of functions that are \emph{essentially} bounded with respect to
the $\| \cdot\|_V$ norm, and $L^\infty(0,T; C^1(\overline\M))$ for
functions that are \emph{essentially} bounded under the
$\|\cdot\|_{C^1(\overline\M)}$ norm. 

In the sequel, we will need the following regularity result, which can be easily
derived from the classical $L^p$ theory for elliptic equations with
Dirichlet boundary conditions (\cite{Gilbarg1983-pq}). 
\begin{lemma}\label{lem:elliptic-reg1}
  Let $\bs{g}\in L^p(\M)$ with $p>1$, and let $\psib$ be a solution of 
\begin{subequations}
  \label{eq:b16a}
  \begin{align}
    \Delta \psib + L \psib &= \sum_{i=1}^2 c_i \dfrac{\p}{\p
             x_i}\bs{g}, & \M&,\label{eq:16aa}\\ 
    \psib &= \lb, & \p\M&,\label{eq:b16ab}\\
    \int_\M\psib dx &= 0. & &\label{eq:b16ac}
  \end{align}
\end{subequations}
Then, $\psib$ has one generalized derivative, and
\begin{equation}
  \label{eq:b16b}
  \| \psib\|_{W^{1,p}(\M)} \leq Cp \|\bs{g}\|_{L^p(\M)}.
\end{equation}
Here $C$ is a constant depending on $\M$ and $c_i's$ only. 
\end{lemma}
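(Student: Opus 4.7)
The plan is to follow the same diagonalization strategy used for Lemma \ref{lem:elliptic-reg}: exploit the symmetry of $L$ to decouple \eqref{eq:b16a} into scalar elliptic boundary value problems, to each of which the classical $L^p$ theory for Dirichlet problems with divergence-form right-hand sides can be applied.

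Since $L$ is symmetric with orthogonal eigenvectors $\vb_0,\vb_1,\vb_2$ and eigenvalues $\lambda_i\in\{0,-1,-3\}$, setting $P=[\vb_0,\vb_1,\vb_2]$ together with $\tilde\psib = P^{-1}\psib$, $\tilde{\bs{g}}=P^{-1}\bs{g}$, and $\tilde{\lb}=P^{-1}\lb$ reduces \eqref{eq:b16a} to the three scalar problems
\begin{equation*}
\Delta\tilde\psi_i + \lambda_i\tilde\psi_i = \sum_{j=1}^2 c_j \frac{\p \tilde g_i}{\p x_j} \text{ in } \M, \quad \tilde\psi_i = \tilde l_i \text{ on } \p\M, \quad \int_\M \tilde\psi_i\,\ud x = 0,
\end{equation*}
for $i=0,1,2$. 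To absorb the constant boundary value, I would write $\tilde\psi_i = w_i + \tilde l_i\phi_i$, where $\phi_i$ is the unique solution of $\Delta\phi_i + \lambda_i\phi_i = 0$ in $\M$ with $\phi_i=1$ on $\p\M$; such a $\phi_i$ exists (for $\lambda_0=0$ simply $\phi_0\equiv 1$, and for $\lambda_i<0$ the operator is coercive) and satisfies $\int_\M \phi_i\,\ud x\ne 0$ by the maximum principle. Then $w_i$ solves the same equation as $\tilde\psi_i$ but with zero Dirichlet data, and the classical $L^p$ estimate for divergence-form Dirichlet problems from \cite{Gilbarg1983-pq}, combined with the linear-in-$p$ growth of the Calder\'on--Zygmund constant for the associated singular integral, yields $\|w_i\|_{W^{1,p}(\M)}\leq Cp\|\tilde g_i\|_{L^p(\M)}$. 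The integral constraint then forces $\tilde l_i=-\int_\M w_i\,\ud x\,/\int_\M \phi_i\,\ud x$, so $|\tilde l_i|$ inherits the same bound via H\"older's inequality, and altogether $\|\tilde\psi_i\|_{W^{1,p}(\M)}\leq Cp\|\tilde g_i\|_{L^p(\M)}$.

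Summing over $i$ and undoing the orthogonal change of basis $P$ transfers the estimate to $\psib$ and delivers \eqref{eq:b16b}. The only real technical point, as for Lemma \ref{lem:elliptic-reg}, is securing the explicit linear-in-$p$ dependence of the constant: this is the usual Calder\'on--Zygmund growth and is standard, but it must be quoted in that precise form rather than from a generic elliptic regularity statement. No new ingredient is required beyond adapting those already deployed in the preceding lemma.
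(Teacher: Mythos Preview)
Your proposal is correct and is precisely the natural implementation of what the paper intends: the paper does not actually give a proof of this lemma, merely stating that it ``can be easily derived from the classical $L^p$ theory for elliptic equations with Dirichlet boundary conditions'' \cite{Gilbarg1983-pq}. Your diagonalization into scalar problems via $P$, followed by splitting off the constant boundary value and invoking the standard Calder\'on--Zygmund $W^{1,p}$ estimate with its linear-in-$p$ growth, is exactly the route suggested by the preceding treatment of Lemma~\ref{lem:elliptic-reg}, and all the steps (existence and positivity of $\phi_i$, recovery of $\tilde l_i$ from the zero-mean constraint) are sound.
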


\section{Weak formulation and {\itshape a priori}
  estimates}\label{s:weak} 
We assume that $\psib$ is a classical solution of
\eqref{eq:6}--\eqref{eq:7} subjecting 
to the constraints \eqref{eq:8}--\eqref{eq:9}. We let $\phib\in
C^\infty(Q_T)$ with $\phib|_{\p\M} = \phib|_{t=T} = 0$. We
take the inner product of \eqref{eq:6} with $\phib$ and integrate by
parts to obtain 
\begin{multline}
  \label{eq:b17}
  -\int_\M (\Delta\psib_0 + L\psib_0)\cdot\phib(x,0) dx - \int_0^T\int_\M
  (\Delta\psib + L\psib)\cdot\dfrac{\p\phib}{\p t}dx dt \\
   - \int_0^T\int_\M
  (\Delta\psib + L \psib) \cdot U \cdot\nabla\phib dxdt =
  \int_0^T\int_\M \fb\cdot \phib dxdt.
\end{multline}
Thus, every classical solution of the multilayer QG equation also
solves the integral equation \eqref{eq:b17}, but the converse is not
true, 
for the QGPV $q = \Delta\psib + L\psib$ may not be differentiable
either in space $x$ or in time $t$. Solutions of \eqref{eq:b17} are
called weak solutions of the multilayer QG. 

We establish the well-posedness of the multilayer QG
\eqref{eq:6}--\eqref{eq:9} by working with its weak formulation first, 
whose precise statement is given here.\\

\noindent{\bfseries Statement of the problem:}\\
\begin{equation}\label{problem}
  \begin{aligned}
  &\textrm{Let } \psib_0\in
  V, \textrm{ and } \fb\in L^\infty(Q_T). \textrm{ Find } \psib\in
  L^\infty(0,T; V) \textrm{ such that   
  \eqref{eq:b17} holds for every } \\
  &\phib\in C^\infty(Q_T) \textrm{
  with }
 \phib|_{\p\M} =\phib|_{t=T} = 0. 
  \end{aligned}
\end{equation}

We first obtain a few {\itshape a priori} estimates on the
solution(s) of \eqref{problem}. 
We choose $\phib(x,t) = g(t)\gammab(x)$ in \eqref{eq:b17} with
$g\in C^\infty([0,T])$, $g(T) = 0$, and  $\gammab\in C^\infty_c(\M)$.
Substituting this $\phib$ into \eqref{eq:b17}, we have
\begin{multline}
  \label{eq:b18}
 - g(0) \int_\M (\Delta\psib_0 + L\psib_0)\cdot\gammab dx - \int_0^Tg'(t)\int_\M
  (\Delta\psib + L\psib)\cdot\gammab(x)dx dt \\
   - \int_0^Tg(t)\int_\M
  (\Delta\psib + L \psib)\nabla^\perp\psib\cdot\nabla\gammab dxdt =
  \int_0^Tg(t)\int_\M f\cdot\gammab dxdt.
\end{multline}
If we take $g(0) = 0$ as well, then \eqref{eq:b18} becomes
\begin{multline}
  \label{eq:b19}
  - \int_0^Tg'(t)\int_\M
  (\Delta\psib + L\psib)\cdot\gammab(x)dx dt    = \\
   \int_0^Tg(t)\int_\M
  \left((\Delta\psib + L \psib)\nabla^\perp\psib\cdot\nabla\gammab +
    f\cdot\gammab\right) dxdt. 
\end{multline}
This shows that 
\begin{multline}
  \label{eq:b20}
  \dfrac{d}{d t}\int_\M
  (\Delta\psib + L\psib)\cdot\gammab(x)dx dt    = \\ \int_\M
  \left((\Delta\psib + L \psib)\nabla^\perp\psib\cdot\nabla\gammab + f\cdot\gammab
  \right) dx\qquad \textrm{in }\D'(0,T).  
\end{multline}
Thanks to the fact that $C^\infty_c(\M)$ is dense in $H^1_0(\M)$, the
above also holds for every $\phib\in H^1_0(\M)$. Thus, we conclude
that  $\Delta\psib + L\psib$ is weakly continuous in time in the following
sense, 
\begin{equation*}
 {\int_\M (\Delta\psib + L\psib)\cdot\gammab dx \textrm{ is continuous in time
    for every } \gammab\in H^1_0(\M).}
\end{equation*}

Integrating by parts in \eqref{eq:b18}, we find 
\begin{multline}
  \label{eq:b21}
 - g(0) \int_\M (\nabla\psib_0\cdot\nabla\gammab + L\psib_0\cdot\gammab) dx +
 \int_0^Tg'(t)\int_\M 
  (\nabla\psib\cdot\nabla\gammab + L\psib\cdot\gammab)dx dt = \\
    \int_0^Tg(t)\int_\M
  \left((\Delta\psib + L \psib)\nabla^\perp\psib\cdot\nabla\gammab +
    f\cdot\gammab\right) dxdt. 
\end{multline}
Again, taking $g(0) = 0$ yields
\begin{equation}
  \label{eq:b22}
 \int_0^Tg'(t)\int_\M 
  (\nabla\psib\cdot\nabla\gammab + L\psib\cdot\gammab)dx dt =
    \int_0^Tg(t)\int_\M
  \left( (\Delta\psib + L \psib)\nabla^\perp\psib\cdot\nabla\gammab +
    f\cdot\gammab\right) dxdt. 
\end{equation}
Since $C^\infty_c(\M)$ is dense in the space $H^1_0(\M)$ under the
usual $H^1$-norm, the above holds for every $\gammab\in
H^1_0(\M)$. Thus, 
\begin{equation}
  \label{eq:b23}
  \dfrac{d}{dt}\int_\M 
  (\nabla\psib\cdot\nabla\gammab + L\psib\cdot\gammab)dx =
    -\int_\M \left(
  (\Delta\psib + L \psib)\nabla^\perp\psib\cdot\nabla\gammab + f\cdot\gammab
  \right) dx\quad\textrm{ in } \D'(0,T).
\end{equation}
This implies that $\psib$ is weakly continuous in time for the
$H^1$-norm, 
\begin{equation*}
{\int_M(\nabla\psib\cdot\nabla\gammab + L\psib\cdot\gammab)dx \textrm{ is
    continuous in time for every } \gammab\in H^1_0(\M). }
\end{equation*}

To investigate the initial value of $\psib$, we take
$g\in\C^\infty([0,T])$ with $g(0) \ne 0$ and $g(T) = 0$. We multiply
\eqref{eq:b20} by $g(t)$ and integrate by parts in $t$ to obtain
\begin{multline}
  \label{eq:b24}
  g(0) \int_\M (\Delta\psib(x,0) + L\psib(x,0))\cdot\gammab dx - \int_0^Tg'(t)\int_\M
  (\Delta\psib + L\psib)\cdot\gammab(x)dx dt \\
   - \int_0^Tg(t)\int_\M
  (\Delta\psib + L \psib)\nabla^\perp\psib\cdot\nabla\gammab dxdt =
  \int_0^Tg(t)\int_\M f\cdot\gammab dxdt.
\end{multline}
Comparing \eqref{eq:b24} with \eqref{eq:b18}, we find that
\begin{equation}
  \label{eq:b25}
  \int_\M(\Delta\psib(x,0) + L\psib(x,0))\cdot\gammab dx = \int_\M (\Delta\psib_0 +
    L\psib_0)\cdot\gammab dx,\qquad \forall\, \gammab\in H^1_0(\M). 
\end{equation}


Multiplying \eqref{eq:b23} by the same $g(t)$ and integrating by parts
in time, we obtain
\begin{multline}
  \label{eq:b26}
 - g(0) \int_\M (\nabla\psi(x,0)\cdot\nabla\gammab + \psi(x,0)\cdot\gammab) dx +
 \int_0^Tg'(t)\int_\M 
  (\nabla\psib\cdot\nabla\gammab + L\psib\cdot\gammab)dx dt \\
    \int_0^Tg(t)\int_\M\left(
  (\Delta\psib + L \psib)\nabla^\perp\psib\cdot\nabla\gammab + f\cdot\gammab
\right) dxdt.
\end{multline}
Comparing this equation with \eqref{eq:b21}, we easily see that 
\begin{multline}
  \label{eq:b27}
  \int_\M \left(\nabla\psib(x,0)\cdot\nabla\gammab +
    L\psib(x,0) \cdot\gammab \right) dx = \\
 \int_\M \left( \nabla\psib_0\cdot\nabla\gammab +
    L\psib_0\cdot\gammab \right) dx,\quad
  \forall\,\gammab\in H^1_0(\M).
\end{multline}


We formally summarize these results in the following lemma.
\begin{lemma}\label{lem:weak-cont}
  The solution $\psib$ to the weak formulation
  \eqref{eq:b17}, if it exists, is weakly continuous in the following sense,
  \begin{subequations}
  \begin{align}
  &\int_\M (\Delta\psib + L\psib)\cdot\gammab dx \textrm{ is continuous in time
    for every } \gammab\in H^1_0(\M),\label{wc-1}\\
  &\int_M(\nabla\psib\cdot\nabla\gammab + L\psib\cdot\gammab)dx \textrm{ is
    continuous in time for every } \gammab\in H^1_0(\M).\label{wc-2}
  \end{align}
  \end{subequations}
The initial condition is satisfied in the sense that 
\begin{subequations}\label{wic}
\begin{align}
  \int_\M(\Delta\psib(x,0) + L\psib(x,0))\cdot\gammab dx &= \int_\M (\Delta\psib_0 +
    L\psib_0)\cdot\gammab dx,\qquad \forall\, \gammab\in H^1_0(\M).\label{wic1} \\
  \int_\M \left(\nabla\psib(x,0)\cdot\nabla\gammab +
    L\psib(x,0) \cdot\gammab \right) dx &= 
 \int_\M \left( \nabla\psib_0\cdot\nabla\gammab +
    L\psib_0\cdot\gammab \right) dx,\quad
  \forall\,\gammab\in H^1_0(\M).\label{wic2}
\end{align}
\end{subequations}
\end{lemma}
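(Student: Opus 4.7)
The plan is to exploit the weak formulation \eqref{eq:b17} with tensor-product test functions $\phib(x,t) = g(t)\gammab(x)$, which decouples the spatial and temporal aspects and reduces the proof to a sequence of scalar distributional identities in $t$, parametrized by $\gammab$. All four assertions of the lemma follow by choosing $g$ and $\gammab$ appropriately, then exploiting density of $C^\infty_c(\M)$ in $H^1_0(\M)$ to extend the test-function class.

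First, to establish \eqref{wc-1}, I would take $\gammab\in C^\infty_c(\M)$ and $g\in C^\infty([0,T])$ with $g(0) = g(T) = 0$. This kills both boundary terms in time and reduces \eqref{eq:b17} to \eqref{eq:b19}, which is precisely the statement that \eqref{eq:b20} holds in $\D'(0,T)$. The right-hand side of \eqref{eq:b20} is essentially bounded in $t$, since $\psib\in L^\infty(0,T; V)$ gives $\Delta\psib + L\psib\in L^\infty(Q_T)$ and, via Lemma \ref{lem:elliptic-reg}, $\nabla^\perp\psib\in L^\infty(0,T; C^{0,\gamma}(\overline\M))$, while $\fb\in L^\infty(Q_T)$ by assumption. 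Hence the scalar map $t\mapsto \int_\M(\Delta\psib + L\psib)\cdot\gammab\, dx$ has an $L^\infty(0,T)$ distributional derivative and is therefore (a.e.\ equal to) an absolutely continuous function of $t$. The uniform-in-$t$ bound on $\Delta\psib + L\psib$ in $L^\infty(\M)\hookrightarrow H^{-1}(\M)$, combined with density of $C^\infty_c(\M)$ in $H^1_0(\M)$, extends the continuity to every $\gammab\in H^1_0(\M)$. For \eqref{wc-2}, I would integrate the Laplacian term in \eqref{eq:b18} by parts, obtaining \eqref{eq:b21} and then \eqref{eq:b22}, and repeat the same argument applied to \eqref{eq:b23}.

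Once both continuity statements are in hand, the initial-value identities \eqref{wic1}, \eqref{wic2} fall out by a comparison trick. Here I would allow $g$ with $g(0)\ne 0$ (still $g(T)=0$). Multiplying the distributional equation \eqref{eq:b20} by such a $g$ and integrating by parts in $t$ produces a boundary term at $t=0$, and the resulting identity is \eqref{eq:b24}. Comparing \eqref{eq:b24} with the original weak formulation \eqref{eq:b18} for the same choice of $\phib = g\gammab$, the integral-in-time pieces cancel and only the $t=0$ traces remain, yielding \eqref{eq:b25}, i.e., \eqref{wic1}. The point values $\psib(x,0)$ are legitimate because \eqref{wc-1} has already established that the functional in question has a genuine continuous representative in $t$. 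The analogous comparison of \eqref{eq:b26} with \eqref{eq:b21} yields \eqref{wic2}.

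The main subtle point is ordering: the $t=0$ traces that appear in \eqref{wic1}--\eqref{wic2} are only meaningful after the two weak-continuity statements have been secured, so these must be proved first. The other delicate ingredient is making sense of the cubic quantity $(\Delta\psib + L\psib)\nabla^\perp\psib\cdot\nabla\gammab$, which requires both $\Delta\psib + L\psib\in L^\infty(Q_T)$ and $\nabla\psib\in L^\infty(0,T;C^{0,\gamma})$; both are provided by the regularity Lemma \ref{lem:elliptic-reg} applied pointwise in $t$, so no new estimate is needed beyond what is already available from the definition of the solution space $L^\infty(0,T; V)$.
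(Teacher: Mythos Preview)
Your proposal is correct and follows essentially the same approach as the paper: tensor-product test functions $\phib = g(t)\gammab(x)$, the choice $g(0)=0$ to isolate the distributional derivative \eqref{eq:b20} (and its integrated-by-parts counterpart \eqref{eq:b23}), density of $C^\infty_c(\M)$ in $H^1_0(\M)$, and then the comparison trick with $g(0)\ne 0$ to recover the initial-value identities. Your remarks on the ordering (continuity first, then traces) and on the boundedness of the right-hand side of \eqref{eq:b20} are accurate and in fact make explicit points the paper leaves implicit.
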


By virtue of Lemma \ref{lem:elliptic-reg}, any solutions of
\eqref{problem} automatically have second weak derivatives in space. In
fact, it also has second temporal-spatial cross derivatives, according
to the following lemma.
\begin{lemma}\label{lem-t-deriv}
Let $\psi(x,t)$ be a generalized solution of
\eqref{eq:6}--\eqref{eq:8} in the sense of \eqref{eq:b17}. Then there
exists generalized derivatives $\p^2\psi/\p x\p t$ and, for any $p \ge
1$, 
\begin{equation}
  \label{eq:b28}
  \sup_{0<t<T} \|\dfrac{\p^2\psib}{\p x\p t}\|_{L^p(\M)} \leq
    Cp\sup_{0<t<T} \left(\|F\|_\lpm +
      \|\psib\|_{L^\infty(0,T;V)}\cdot\|\nabla\psib\|_\lpm \right).
\end{equation}
\end{lemma}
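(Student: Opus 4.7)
The plan is to view $\chi := \p\psib/\p t$ as the solution of an elliptic boundary value problem whose right-hand side is supplied by the transport equation itself, and then apply the regularity estimates of Lemmas~\ref{lem:elliptic-reg} and~\ref{lem:elliptic-reg1}. Using $\nabla\cdot U = 0$, the PV equation~\eqref{eq:6} can be rewritten as $\p_t\qb = \fb - \nabla\cdot(U\qb)$ in $\D'(Q_T)$. Since $\qb = \Delta\psib + L\psib$, this means, at least formally,
\begin{equation*}
\Delta\chi + L\chi = \fb - \nabla\cdot(U\qb) \quad \textrm{in } \D'(Q_T),
\end{equation*}
together with $\chi(\cdot,t)$ being constant in $x$ on $\p\M$ (inherited from $\psib = \lb(t)$) and $\int_\M\chi\,dx = 0$ (inherited from the mass-conservation constraint).

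To make this rigorous without presupposing the time-differentiability of $\lb$ or $\psib$, I would work with the difference quotient $\chi^h(x,t) := h^{-1}(\psib(x,t+h) - \psib(x,t))$. For each $h$, $\chi^h(\cdot,t)$ is genuinely constant in $x$ on $\p\M$ and has zero mean, and, using the integrated form of the transport equation, it satisfies
\begin{equation*}
\Delta\chi^h + L\chi^h = \frac{1}{h}\bigl(\qb(\cdot,t+h) - \qb(\cdot,t)\bigr) = \frac{1}{h}\int_t^{t+h}\bigl(\fb - \nabla\cdot(U\qb)\bigr)(\cdot,s)\,ds
\end{equation*}
in the distributional sense.

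Now split $\chi^h = \chi^h_1 + \chi^h_2$ according to the two terms on the right-hand side, each piece taking a constant boundary value and zero mean uniquely determined by its right-hand side. To $\chi^h_1$, apply the standard $L^p$ Calder\'on--Zygmund estimate for the operator $\Delta+L$ (within the admissible constant-boundary, mean-zero class) to obtain
\begin{equation*}
\|\chi^h_1\|_{W^{2,p}(\M)}\le Cp\sup_{0<s<T}\|\fb(\cdot,s)\|_{L^p(\M)}.
\end{equation*}
To $\chi^h_2$, apply Lemma~\ref{lem:elliptic-reg1} with $\bs{g} = -h^{-1}\int_t^{t+h}U\qb\,ds$:
\begin{equation*}
\|\chi^h_2\|_{W^{1,p}(\M)} \le Cp\sup_{0<s<T}\|U\qb(\cdot,s)\|_{L^p(\M)} \le Cp\|\psib\|_{L^\infty(0,T;V)}\sup_{0<s<T}\|\nabla\psib(\cdot,s)\|_{L^p(\M)},
\end{equation*}
where the last step uses $\|\qb(\cdot,s)\|_{L^\infty(\M)} = \|\psib(\cdot,s)\|_V$ and $\|U(\cdot,s)\|_{L^p(\M)} = \|\nabla\psib(\cdot,s)\|_{L^p(\M)}$.

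Adding the two estimates yields a bound on $\|\nabla\chi^h(\cdot,t)\|_{L^p(\M)}$ uniform in $h$ and matching the right-hand side of~\eqref{eq:b28}. Weak compactness in the reflexive space $W^{1,p}(\M)$ then produces a limit $\chi\in L^\infty(0,T;W^{1,p}(\M))$ as $h\to 0$, which is identified with $\p\psib/\p t$ by testing against smooth compactly supported functions in $Q_T$, and~\eqref{eq:b28} follows by weak lower semicontinuity of the $L^p$-norm. The main hurdle is the careful handling of the divergence-form source $\nabla\cdot(U\qb)$, in which $\qb$ is only essentially bounded and $U$ only in $L^p$; Lemma~\ref{lem:elliptic-reg1} is precisely tailored to this kind of source and makes this step routine once the difference-quotient framework is in place.
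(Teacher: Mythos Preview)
Your approach is the paper's: view $\partial_t\psib$ as the solution of $(\Delta+L)\partial_t\psib = \fb - \nabla\cdot(U\qb)$ with the non-standard boundary conditions and apply Lemma~\ref{lem:elliptic-reg1}; the paper does this in two lines starting from \eqref{eq:b20}, and your difference-quotient framework supplies the rigor the paper omits. One small mismatch worth flagging: the paper tacitly writes $\fb = \nabla\times F$, so that the \emph{entire} source is in divergence form and a single application of Lemma~\ref{lem:elliptic-reg1} yields the $\|F\|_{L^p}$ term that actually appears in \eqref{eq:b28}; your Calder\'on--Zygmund step on $\chi^h_1$ instead produces $\|\fb\|_{L^p}$, which is not the quantity in the stated bound---simply handling $\fb=\nabla\times F$ with Lemma~\ref{lem:elliptic-reg1} as well (rather than splitting) recovers the claimed form.
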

\begin{proof}

From \eqref{eq:b20} one derives that, for a.e.~$t\in(0,\,T)$,
  \begin{equation}
    \label{eq:b29}
    (\Delta + L)\dfrac{\p}{\p t}\psib = \nabla\times F -
    \nabla\cdot\left(\nabla^\perp \psib\cdot(\Delta\psib +
      L\psib)\right) \in  H^{-1}(\M). 
  \end{equation}
Then, by Lemma \ref{lem:elliptic-reg1}, 
\begin{equation*}
  \left\|\dfrac{\p\psib}{\p t}\right\|_{\w1pm} \leq Cp\left(\|F\|_\lpm +
    \|\psib\|_{L^\infty(0,T: V)} \|\nabla\psib\|_\lpm\right).
\end{equation*}
Taking the supreme norm in time $t$ on the right-hand side, and then on
the left-hand side, we obtain 
\begin{equation*}
  \sup_{0<t<T} \left\|\dfrac{\p\psib}{\p t}\right\|_{\w1pm} \leq
  Cp\sup_{0<t<T}\left(\|F\|_\lpm + 
    \|\psib\|_{L^\infty(0,T: V)} \|\nabla\psib\|_\lpm\right).
\end{equation*}
\end{proof}

\section{Uniqueness}\label{s:unique}
In this section, we establish  the uniqueness of the
weak solution of \eqref{eq:6}--\eqref{eq:9}, if it exists. 
\begin{theorem}\label{uniqueness}
  The solution to the weak problem  \eqref{problem},
  if it exists, must be unique.
\end{theorem}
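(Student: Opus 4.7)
My plan is to derive an energy inequality for the difference $\psib := \psib_1 - \psib_2$ of two hypothetical weak solutions of \eqref{problem} and then force $\psib\equiv 0$ via a Yudovich--Osgood ODE argument. Writing $\qb := \qb_1 - \qb_2 = \Delta\psib + L\psib$, $U := \nabla^\perp\psib$, and $\lb(t):=\lb_1(t)-\lb_2(t)$, I would first subtract the two equations to obtain
\begin{equation*}
\frac{\p}{\p t}\qb + U_1\cdot\nabla\qb + U\cdot\nabla\qb_2 = 0,\qquad \psib(\cdot,0)=0,\quad \psib|_{\p\M}=\lb(t),\quad \int_\M\psib\,dx=0,
\end{equation*}
then pair it with $\psib$ in $L^2(\M)$, using Lemma~\ref{lem-t-deriv} for the requisite time regularity. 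Three structural cancellations should drive the identity. Integrating the transport equation and using no-flux shows $\int_\M\qb\equiv 0$, so the boundary contribution $\lb(t)\cdot\frac{d}{dt}\int_\M\qb$ arising from IBP on $\int\p_t\Delta\psib\cdot\psib$ vanishes; the advection of $\qb_2$ against $U$ vanishes by the Jacobian identity $\nabla^\perp\psi_i\cdot\nabla\psi_i\equiv 0$; and the symmetry of $L$ (the very reason for the $F_i\equiv 1$ simplification) gives a clean $\tfrac{1}{2}\frac{d}{dt}\int L\psib\cdot\psib$ term. Setting $E(t):=\tfrac{1}{2}\int_\M|\nabla\psib|^2\,dx-\tfrac{1}{2}\int_\M L\psib\cdot\psib\,dx\ge 0$ (nonnegative because $-L$ is positive semi-definite), I expect to land on $\tfrac{dE}{dt}=-\sum_{i=1}^{3}\int_\M q_i\,J(\psi_{1,i},\psi_i)\,dx$ with $J(f,g):=\nabla^\perp f\cdot\nabla g$.

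The main obstacle is to bound this right-hand side, since $\qb$ is only essentially bounded. Following the Yudovich template I would split $q_i=\Delta\psi_i+(L\psib)_i$; the $L$-piece contributes at most $CE$ by H\"older and the Poincar\'e inequality (available because $\int_\M\psib=0$). For the $\Delta\psi_i$ piece I integrate by parts once to move the Laplacian onto the Jacobian: the boundary integral vanishes because $\psi_{1,i}$ and $\psi_i$ are both constant on $\p\M$, forcing $J(\psi_{1,i},\psi_i)|_{\p\M}=0$. A second IBP exploiting $\nabla\cdot\nabla^\perp=0$ should annihilate one of the two resulting Jacobian terms and leave an expression in which a second derivative $\nabla^2\psib_1$ appears explicitly, at which point Lemma~\ref{lem:elliptic-reg} supplies the critical bound $\|\nabla^2\psib_1\|_{L^p(\M)}\le Cp\|\qb_1\|_{L^\infty(\M)}$. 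Combined with the interpolation $\|\nabla\psib\|_{L^{2p'}}\le C\|\nabla\psib\|_{L^2}^{1/p'}$, permitted by the $L^\infty$ bound on $\nabla\psib$ coming from \eqref{eq:hold}, this should yield
\begin{equation*}
\frac{dE}{dt}\le CE+Cp\,E^{\,1-1/p}\qquad \text{for every } p>1,
\end{equation*}
with $C$ independent of $p$.

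A standard Yudovich--Osgood argument then closes the proof. On any short interval where $E\le 1$ the right-hand side is controlled by $2Cp\,E^{1-1/p}$, so $\tfrac{d}{dt}(E^{1/p})\le 2C$ and hence $E(t)\le (2Ct)^p$. Sending $p\to\infty$ on $[0,T_0]$ with $T_0<1/(2C)$ forces $E\equiv 0$ there; the boundary condition $\psib|_{\p\M}=\lb(t)$ together with $\int_\M\psib\,dx=0$ then upgrades this to $\psib\equiv 0$ on $[0,T_0]$. The weak continuity afforded by Lemma~\ref{lem:weak-cont} allows me to restart the argument at $t=T_0$, and a finite iteration covers $[0,T]$, establishing $\psib_1\equiv\psib_2$.
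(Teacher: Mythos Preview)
Your proposal is correct and follows the same Yudovich energy-method template as the paper's proof: subtract, pair with the difference, reduce the dangerous term to $\int|\nabla\psib|^2|\nabla^2\psib^1|$ via one integration by parts (using that $J(\psi_{1,i},\psi_i)$ vanishes on $\p\M$), invoke the $L^p$ bound \eqref{eq:reg1}, and close with the Osgood argument. The one genuine point of departure is how you handle the non-homogeneous boundary value $\psib|_{\p\M}=\lb(t)$. The paper shifts to $\hb^\#:=\psib-\lb\in H^1_0(\M)$, tests with $\hb^\#$, and then must show that the extra terms produced by the shift --- namely $-\tfrac12\int_\M L\hb^\#\!\cdot\hb^\# + \tfrac{1}{2|\M|}\big(L\!\int_\M\hb^\#\big)\!\cdot\!\int_\M\hb^\#$ in \eqref{eq:b39} --- have the right sign, which it does via the explicit eigenvector decomposition of $L$. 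You instead pair directly with $\psib$ and kill the boundary contribution by the conservation law $\int_\M\qb\equiv 0$; this is cleaner and actually yields the \emph{same} energy (your $E$ coincides with the left-hand side of \eqref{eq:b39}), so the two routes merge from that point on.

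One technical step you gloss over: in the weak formulation \eqref{problem} the test functions vanish on $\p\M$, so ``integrating the transport equation and using no-flux'' to obtain $\int_\M\qb\equiv 0$ is a formal computation --- constant-in-space test functions are not admissible. The rigorous justification needs a boundary-layer cutoff $\chi_\epsilon$; the resulting error terms $\int_\M\qb\,U_1\!\cdot\!\nabla\chi_\epsilon$ and $\int_\M\qb_2\,U\!\cdot\!\nabla\chi_\epsilon$ vanish as $\epsilon\to 0$ because the velocities are H\"older continuous up to $\p\M$ by Lemma~\ref{lem:elliptic-reg} and tangential there. Once this is filled in, your argument is complete.
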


The uniqueness proof largely follows the arguments laid out by
Yudovich (\cite{Yudovich1963-bj}). What are new to the present problem
include the presence of multiple vertical layers and the
non-homogeneous boundary conditions that are needed to accommodate the
free fluctuations of the layer interfaces.

\begin{proof}
  We let $\psib^1$ and $\psib^2$ be two solutions to the weak problem
  for the same initial data $\psib_0$, and $t$ be a fixed point in
  $[0, T]$. Then, for an 
  arbitrary $\phib\in C^\infty(Q_t)$ with $\phib|_{\p\M} =
  \phib(\cdot,t) = 0$, $\psib^1$ and $\psib^2$ satisfy the following
  equations, respectively,
\begin{multline}
  \label{eq:b30}
  \int_\M (\Delta\psib_0 + L\psib_0)\cdot\phib(x,0) dx - \int_0^t\int_\M
  (\Delta\psib^1 + L\psib^1)\cdot\dfrac{\p\phib}{\p t}dx dt \\
   - \int_0^T\int_\M
  (\Delta\psib^1 + L \psib^1)\cdot\nabla^\perp\psib^1\cdot\nabla\phib dxdt =
  \int_0^T\int_\M \fb\cdot\phib dxdt,
\end{multline}
\begin{multline}
  \label{eq:b31}
  \int_\M (\Delta\psib_0 + L\psib_0)\cdot\phib(x,0) dx - \int_0^t\int_\M
  (\Delta\psib^2 + L\psib^2)\cdot\dfrac{\p\phib}{\p t}dx dt \\
   - \int_0^T\int_\M
  (\Delta\psib^2 + L \psib^2)\cdot\nabla^\perp\psib^2\cdot\nabla\phib dxdt =
  \int_0^T\int_\M \fb\cdot\phib dxdt.
\end{multline}
Subtracting these two equations, and denoting $\hb = \psib^1 - \psib^2$,
we obtain
\begin{multline}
  \label{eq:b32}
- \int_0^t\int_\M
  (\Delta \hb + L \hb)\cdot\dfrac{\p\phib}{\p t}dx dt 
   - \int_0^T\int_\M
  (\Delta \hb  + L \hb)\cdot\nabla^\perp\psib^1\cdot\nabla\phib dxdt\\
  + \int_0^T\int_\M
  (\Delta \psib^2  + L\psib^2)\cdot\nabla^\perp \hb \cdot\nabla\phib
  dxdt = 0. 
\end{multline}
An integration by parts in space in the first term leads to
\begin{multline}
  \label{eq:b32a}
\int_0^t\int_\M
  (\nabla \hb\cdot\nabla\p_t\phib -L \hb\cdot\p_t\phib)dx dt 
   - \int_0^T\int_\M
  (\Delta \hb  + L \hb)\cdot\nabla^\perp\psib^1\cdot\nabla\phib dxdt\\
  + \int_0^T\int_\M
  (\Delta \psib^2  + L\psib^2)\cdot\nabla^\perp \hb \cdot\nabla\phib dxdt = 0.
\end{multline}
Both $\phib^1$ and $\phib^2$ assume space-independent values on
the boundary $\p\M$, and so does the difference $\hb$ between
them. Thus, after a shifting in the vertical direction, $\hb$ will
vanish on the boundary. We denote this shifted function by $\hb^\#\in
L^\infty(0,t; H^1_0(\M))$. The functions $\hb$ and $\hb^\#$ are
related via 
\begin{equation}
  \label{eq:b33}
  \hb(x,\tau) = \hb^\#(x,\tau) + \lb(\tau),\qquad 0\leq \tau\leq t
\end{equation}
for some function $\lb(\tau)$. Both $\psib^1$ and $\psib^2$ have a zero
average over $\M$, and so does their difference $\hb$. Integrating
\eqref{eq:b33} over $\M$ we establish a simple relation between $\lb$
and $\hb^\#$,
\begin{equation}
  \label{eq:b34}
  \lb(\tau) = -\dfrac{1}{|\M|}\int_\M \hb^\#(x,\tau)dx.
\end{equation}

Replacing $\hb$ by $\hb^\#+\lb$ in the first and third integrals of
\eqref{eq:b32a} yields
 \begin{multline}
  \label{eq:b35}
\int_0^t\int_\M
  (\nabla \hb^\#\cdot\nabla\p_t\phib - L \hb^\#\cdot\p_t\phib)dx d\tau - \\
  \int_0^t\int_M L\lb\cdot\p_t\phib dxd\tau 
   - \int_0^T\int_\M
  (\Delta \hb^\#  + L\hb^\# + L\lb)\cdot\nabla^\perp\psib^1\cdot\nabla\phib dxd\tau \\
  + \int_0^T\int_\M
  (\Delta \psib^2  + L\psib^2)\cdot\nabla^\perp \hb^\# \cdot\nabla\phib dxd\tau = 0.
\end{multline}
We integrate by parts in time $t$ in the first integral, and use the
facts that 
\begin{align*}
  &\hb^\#(\cdot,\tau) \in H^1_0(\M), \qquad a.e.~\tau,\\
  &\int_\M (\Delta\hb + L\hb)|_{t=0}\cdot\phib(\cdot,0) dx = 0,\\
  &\int_\M (\nabla\hb(\cdot,0)\cdot\nabla\phib - L\hb(\cdot,0)\cdot\phib) dx = 0,
\end{align*}
and the regularity results from Lemma \ref{lem-t-deriv}, we obtain
 \begin{multline}
  \label{eq:b36}
-\int_0^t\int_\M
  (\p_t\nabla \hb^\#\cdot\nabla\phib - L\p_t \hb^\#\cdot\phib)dx d\tau +\\
  \int_0^t\int_M L\p_t \lb\cdot \phib dxd\tau 
   - \int_0^T\int_\M
  (\Delta \hb^\#  + L\hb^\# + L\lb )\nabla^\perp\psib^1\cdot\nabla\phib dxd\tau \\
  + \int_0^T\int_\M
  (\Delta \psib^2  + L\psib^2)\nabla^\perp \hb^\# \cdot\nabla\phib dxd\tau = 0.
\end{multline}
We note that each of the integrals is linear and continuous with
respect to $\phib$ in the norm of $L^2(0,T: H^1_0(\M))$. Thus, we
can let $\phib$ tend to $\hb^\#$ in $L^2(0,T; H^1_0(\M))$, pass to
the limit in \eqref{eq:b36}, and noticing the fact that $\nabla \hb^\#\cdot
\nabla^\perp \hb^\# = 0$, we obtain
 \begin{multline*}
-\int_0^t\int_\M
  (\p_t\nabla \hb^\#\cdot\nabla \hb^\# -  L\p_t \hb^\#\cdot \hb^\#)dx d\tau +
  \int_0^t\int_M L\p_t \lb\cdot\hb^\# dxd\tau  \\
   - \int_0^T\int_\M
  (\Delta \hb^\#  + L\hb^\# + L\lb)\cdot\nabla^\perp\psib^1\cdot\nabla
  \hb^\# dxd\tau = 0. 
\end{multline*}
\begin{multline}
\label{eq:b37}
-\int_0^t\dfrac{1}{2}\dfrac{d}{dt}\|\hb^\#\|^2_{H^1_0(\M)}dt d\tau +
  \int_0^t\int_\M L\p_t\hb^\#\cdot\hb^\#dx d\tau \\
   + \int_0^t\p_t \int_\M L\p_t\lb\cdot \hb^\# dxd\tau    -\\
   \int_0^t\int_\M 
  (\Delta \hb^\#  + L\hb^\# + L\lb)\cdot\nabla^\perp\psib^1\cdot\nabla
  \hb^\# dxd\tau = 0. 
\end{multline}
Noticing that $\lb$ is independent of $\xb$ and the no-flux boundary
conditions on $\psib^1$, we find 
\begin{align*}
&\int_0^t\int_\M L\lb\cdot\nabla^\perp\psib^1\cdot\nabla  \hb^\# dxd\tau\\
=& \int_0^t L\lb\cdot\int_\M \nabla\cdot\left(
\nabla^\perp\psib^1 \hb^\#\right) dxd\tau\\
=& \int_0^t L\lb\cdot\int_{\p\M} 
\nabla^\perp\psib^1\cdot\nb\cdot \hb^\#ds d\tau = 0.\\
\end{align*}
Using the fact that the coefficient matrix $L$ is symmetric, we can
write the second term on the left-hand side of \eqref{eq:b37} as
\begin{align*}
&\int_0^t\int_\M L\p_t\hb^\#\cdot\hb^\#dx d\tau \\
=& \int_0^t \dfrac{1}{2}\dfrac{d}{dt} \int_\M L\hb^\#\cdot\hb^\#dx d\tau \\
=& \dfrac{1}{2} \int_\M L\hb^\#(\xb,t)\cdot\hb^\#(\xb,t)dx - 
 \dfrac{1}{2} \int_\M L\hb^\#(\xb,0)\cdot\hb^\#(\xb,0)dx \\
=& \dfrac{1}{2} \int_\M L\hb^\#(\xb,t)\cdot\hb^\#(\xb,t)dx.
\end{align*}
Regarding the third term on the left-hand side of \eqref{eq:b37}, we
again use the symmetry of the linear operator $L$ and the formula
\eqref{eq:b34} to obtain
\begin{align*}
\int_0^t \int_\M L\p_t\lb\cdot \hb^\# dxd\tau =
\dfrac{-1}{2|\M|} \left(L \int_\M \hb^\#(x,t) dx\right) \cdot
\left(\int_\M \hb^\#(x,t) dx\right)
\end{align*}
Inserting these identities into \eqref{eq:b37}, we have
\begin{multline}
\label{eq:b38}
-\dfrac{1}{2}\|\hb^\#(\cdot,t)\|^2_{H^1_0(\M)} d\tau + 
\dfrac{1}{2} \int_\M L\hb^\#(\xb,t)\cdot\hb^\#(\xb,t)dx -{ }\\
\dfrac{1}{2|\M|} \left(L \int_\M \hb^\#(x,t) dx\right) \cdot
\left(\int_\M \hb^\#(x,t) dx\right)\\
  = \int_0^t\int_\M
  (\Delta \hb^\#  + L\hb^\#)\nabla^\perp\psib^1\cdot\nabla \hb^\# dxd\tau.
\end{multline}
\begin{multline}
\label{eq:b39}
\dfrac{1}{2}\|\hb^\#(\cdot,t)\|^2_{H^1_0(\M)} d\tau -
\dfrac{1}{2} \int_\M L\hb^\#(\xb,t)\cdot\hb^\#(\xb,t)dx +{ }\\
\dfrac{1}{2|\M|} \left(L \int_\M \hb^\#(x,t) dx\right) \cdot
\left(\int_\M \hb^\#(x,t) dx\right)\\
  = -\int_0^t\int_\M
  (\Delta \hb^\#  + L\hb^\#)\nabla^\perp\psib^1\cdot\nabla \hb^\# dxd\tau.
\end{multline}
We expand $\hb^\#$ in the orthonormal eigenvectors $v_0$, $v_1$ and
$v_2$ of $L$,
\begin{equation*}
  \hb^\#(x,t) = c_0(x,t) \vb_0 + c_1(x,t) \vb_1 + c_2(x,t) \vb_2.
\end{equation*}
Substituting this expansion in the second term on the left-hand side of
\eqref{eq:b39} and using the orthogonality of the eigenfunctions,  one
finds that 
\begin{align*}
  -\dfrac{1}{2} \int_\M L\hb^\#(\xb,t)\cdot\hb^\#(\xb,t)dx 
&= -\dfrac{1}{2}\left( c_1\lambda_1 \vb_1 + c_2\lambda_2 \vb_2, c_0 \vb_0 +
  c_1 \vb_1 + c_2 \vb_2\right) \\
&= -\dfrac{1}{2}\int_\M (\lambda_1 c_1^2 + \lambda_2 c_2^2) dx.
\end{align*}
Similarly, substituting this expansion for $\hb^\#$ into the third
term on the left-hand side of the equation, and using the fact that
the eigenvalues of $L$ are non-positive, one finds that
\begin{align*}
&\dfrac{1}{2|\M|} \left(L \int_\M \hb^\#(x,t) dx\right) \cdot
\left(\int_\M \hb^\#(x,t) dx\right)\\  
=& \dfrac{1}{2|\M|} \left(\lambda_1 \left(\int_\M c_1(x,t) dx\right)^2
   + \lambda_2 \left(\int_\M c_2(x,t) dx\right)^2\right)\\
\ge& \dfrac{1}{2}\int_\M (\lambda_1 c_1^2 + \lambda_2 c_2^2) dx.
\end{align*}
Thus the combination of the second and third terms on the left-hand
side of \eqref{eq:b39} is positive, and the following
inequality results,
\begin{equation}
  \label{eq:b40}
 \dfrac{1}{2} \|\hb^\#(\cdot,t)\|_{H^1_0(\M)}^2 \leq - \int_0^t\int_\M
 (\Delta \hb^\# + L\hb^\#)\cdot \nabla^\perp \psib^1\cdot\nabla \hb^\# dx
 d\tau. 
\end{equation}

Unlike in the barotropic case (see \cite{Chen2017-fh}), the inner product involving the
zero order term on the right-hand side does not vanish, thanks to  the
vertical layer interactions. We proceed by obtaining 
an estimate on this term. Using the fact that the linear operator $L$
is a constant 
coefficient matrix and $\nabla^\perp\psib^1$ is H\"older continuous on
$\M$, thanks to Lemma \ref{lem:elliptic-reg}, we obtain that
\begin{equation}\label{eq:41}
\left\vert \int_0^t\int_\M
  L\hb^\#\cdot \nabla^\perp \psib^1\cdot\nabla \hb^\# dx
 d\tau \right\vert
\leq C(\M) |U^1|_\infty \int_0^t|\nabla \hb^\#|^2_{L^2(\M)} d\tau.
\end{equation}
 For the other term on the right-hand
side of \eqref{eq:b40}, we first notice that
$\nabla^\perp\psib^1\cdot\nabla\hb^\# = 0$ on the boundary. Then, by
an integration by 
parts, one obtains 
\begin{equation*}
\int_0^t\int_\M
 \Delta \hb^\# \cdot \nabla^\perp \psib^1\cdot\nabla \hb^\# dx
 d\tau = -\int_0^t\int_\M
 \nabla \hb^\#\cdot \nabla\cdot( \nabla^\perp\psib^1\cdot\nabla \hb^\# )dx d\tau 
\end{equation*}
To further investigate the integral on the right-hand side, we
introduce index $i, j = 1,\,2$ for the horizontal coordinates, and
$l=1,\,2,\,3$ for the vertical layers, and the notations
\begin{equation*}
  (u_{l1},\,u_{l2}) \equiv \nabla^\perp \psib^1 \equiv
  (-\p_2\psib^1_l,\,\p_1\psib^1_l). 
\end{equation*}
Then, using the Einstein convention of repeated indices for $1\le
i,\,j\le 2$, we proceed with the derivation,
\begin{align*}
&\int_0^t\int_\M\Delta \hb^\# \cdot \nabla^\perp \psib^1\cdot\nabla \hb^\# dx
 d\tau \\
=& -\int_0^t\int_\M \sum_{l=1}^3 
 \p_i h^\#_l \p_i(u^1_{lj}\p_j h^\#_l)dx \tau \\
=& -\int_0^t\int_\M \sum_{l=1}^3 
 \p_i h^\#_l (\p_i u^1_{lj}\p_j h^\#_l + u^1_{lj} \p_j\p_i h^\#_l )dx
   \tau \\
=& -\sum_{l=1}^3 \int_0^t\int_\M 
 \p_i h^\#_l \p_i u^1_{lj}\p_j h^\#_l + \dfrac{1}{2}u^1_{lj} \p_j(\p_i h^\#_l)^2 )dx \tau \\
=& -\sum_{l=1}^3 \int_0^t\int_\M 
 \p_i h^\#_l \p_i u^1_{lj}\p_j h^\#_l dx \tau.
\end{align*}
The last step results thanks to the fact that $(u^1_{l1},\,u^1_{l2})$
has a zero normal component along the boundary. Reintroducing the
expressions for $u_{li}$ in the above, and after rearrangements, we
find 
\begin{align*}
  &\int_0^t \int_\M \Delta \hb^\# \nabla^\perp\psib^1\cdot\nabla
    \hb^\# dx \\
= & -\sum_{l=1}^3 \int_0^t\int_\M
  \left\{ \left[-(\p_1 h_l^\#)^2 + (\p_2 h_l^\#)^2\right]\p_1\p_2\psi_l^1 +
    \p_1 h^\#_l\p_2 h_l^\#(\p_1^2\psi_l^1 - \p_2^2\psi_l^1)\right\} dx.
\end{align*}
Taking the absolute value on both sides, one obtains
\begin{equation}\label{eq:42}
  \left\vert \int_0^t \int_\M \Delta \hb^\# \nabla^\perp\psib^1\cdot\nabla
    \hb^\# dx\right\vert \leq C \int_0^t\int_\M |\nabla\hb^\#|^2 \cdot
|\nabla^2\psib^1|dx d\tau.
\end{equation}
Here $C$ is a constant independent of $\hb^\#$, $\psib^1$, $\M$, or
$q_0$. Combining \eqref{eq:b40}, \eqref{eq:41}, and \eqref{eq:42} , we
derive that 
\begin{multline}
  \label{eq:43}
 \dfrac{1}{2} \|\hb^\#(\cdot,t)\|_{H^1_0(\M)}^2 \leq C \int_0^t\int_\M |\nabla\hb^\#|^2 \cdot
|\nabla^2\psib^1|dx d\tau + \\
C(\M)|U^1|_\infty \int_0^t|\nabla \hb^\#|^2_{L^2(\M)} d\tau.
\end{multline}
The second-order derivatives of $\psib^1$ are not uniformly bounded
over $\M$; they belong to $L^p(\M)$ for any $p > 1$ according to the
regularity result Lemma \ref{lem:elliptic-reg}. They can be handled
with the technique employed in \cite{Yudovich1963-bj, Chen2017-fh}. 
We  denote
\begin{align*}
  M_1 \equiv & \sup_{0<t<T} \|\psib^1(\cdot,t)\|_V,\\
  M_2 \equiv & \sup_{0<t<T} \|\hb(\cdot,t)\|_V.
\end{align*}
It is then inferred from Lemma \ref{lem:elliptic-reg} that
\begin{align*}
&|U^1|_\infty \leq C(\M) M_1,\\
  &\sup_{0<t<T} \| \psib^1(\cdot,t)\|_{W^{2,\frac{2}{\epsilon}}(\M)}
  \leq C\dfrac{2}{\epsilon} M_1,\\
  &\sup_{0<t<T} \| \nabla \hb^\# (\cdot,t)\|_{\linfm}
  \leq C M_2,
\end{align*}
We let $\epsilon >0$ be arbitrary, and using the H\"older's
inequality, we derive an estimate for the first integral on
the right-hand side of \eqref{eq:43},
\begin{align*}
&C \int_0^t\int_\M |\nabla\hb^\#|^2 \cdot
|\nabla^2\psib^1|dx d\tau  \\
\leq &  C |\nabla \hb^\#|^\epsilon_{L^\infty(Q_T)} \int_0^t \int_\M |\nabla
  \hb^\#|^{2-\epsilon} \cdot |\nabla^2\psib^1| dx d\tau\\
\leq &  C |\nabla \hb^\#|^\epsilon_{L^\infty(Q_T)} \int_0^t 
\left(\int_\M |\nabla
  \hb^\#|^{(2-\epsilon)\cdot\frac{2}{2-\epsilon}}dx\right)^{\frac{2}{2-\epsilon}}
       \left(\int_\M  |\nabla^2\psib^1|^{\frac{2}{\epsilon}} dx\right)^{\frac{\epsilon}{2}} d\tau\\
\equiv &  C |\nabla \hb^\#|^\epsilon_{L^\infty(Q_T)} \int_0^t 
   \|\nabla\hb^\#(\cdot,\tau)\|^{2-\epsilon}_{\l2m} \cdot
   \|\psib^1(\cdot,\tau)\|_{W^{2,\frac{2}{\epsilon}}(\M)} d\tau\\
\leq & \dfrac{C(\M)M_1 M_2^\epsilon}{2} \int_0^t 
   \|\nabla\hb^\#(\cdot,\tau)\|^{2-\epsilon}_{\l2m} d\tau.
\end{align*}
Applying this estimate in \eqref{eq:43} yields
\begin{multline}
\label{eq:b44}
\| \hb^\#(\cdot,t)\|^2_{H^1_0(\M)} 
    \leq 
   C(\M) M_1 \left( \dfrac{M_2^\epsilon}{\epsilon} \int_0^t
   \|\hb^\#(\cdot,\tau)\|^{2-\epsilon}_{\h10m} d\tau\right. + \\
\left.\int_0^t \|\hb^\#(\cdot,\tau)\|^{2}_{\h10m} d\tau.\right)
\end{multline}

We denote 
\begin{equation*}
  \sigma(\cdot,t) \equiv \| \hb^\#(\cdot, t)\|_{\h10m}.
\end{equation*}
Then \eqref{eq:b44} can be written as 
\begin{equation}
  \label{eq:b45}
  \sigma^2(t) \leq C(\M, M_1)\left( \dfrac{M^\epsilon_2}{\epsilon}\int_0^t
  \sigma^{2-\epsilon}(\tau) d\tau + \int_0^t \sigma^2(\tau)d\tau\right)
\end{equation}
An estimate on $\sigma$ can be obtained by  the Gronwall
inequality. Indeed, denoting the right-hand side as $F(t)$, taking its
derivative, one has
\begin{equation*}
  \dfrac{d}{dt} F(t) \leq \dfrac{C M^\epsilon_2}{\epsilon}
  F^{1-\frac{\epsilon}{2}}(t) + CF(t). 
\end{equation*}
An integration of this inequality yields
\begin{equation*}
  F(t) \leq e^{2Ct} M_2^2 \left(\dfrac{C t}{2}\right)^{\frac{2}{\epsilon}}.
\end{equation*}
Thus,
\begin{equation}
  \label{eq:b46}
  \| \hb^\#(\cdot,t)\|^2_{\h10m} \leq F(t) \leq e^{2Ct} M_2^2
  \left(\dfrac{C t}{2}\right)^{\frac{2}{\epsilon}}. 
\end{equation}
We take 
\begin{equation*}
  t^\ast = \dfrac{1}{C}.
\end{equation*}
Then, for $0\leq t\leq t^\ast$, 
\begin{equation}
  \label{eq:b47}
  \| \hb^\#(\cdot,t)\|^2_{\h10m} \leq
e^{2Ct^\ast} M_2^2 \left(\dfrac{1}{2}\right)^{\frac{2}{\epsilon}}.   
\end{equation}
This estimate holds for arbitrary $\epsilon > 0$. Thus, $  \|
\hb^\#(\cdot,t)\|^2_{\h10m} $ must vanish for $0\leq t\leq t^\ast$. The
process can be repeated over subsequent time intervals of length
$t^\ast$, and thus $  \| \hb^\#(\cdot,t)\|^2_{\h10m} = 0$ for the whole
time interval $[0,\,T]$. Combined with the relations \eqref{eq:b33} and
\eqref{eq:b34}, it implies that 
\begin{equation*}
  \hb(\cdot,t) = 0\qquad \textrm{for a.e. } 0\leq t\leq T.
\end{equation*}
Thus it has been proven that the  solution to the weak problem
\eqref{problem}, if it exists, 
must be unique.
\end{proof}

\section{Existence of a solution to the weak problem}\label{s:exist} 
Yudovich (\cite{Yudovich1963-bj}) establishes the existence of a weak
solution to the two-dimensional Euler equation through an iterative
scheme and the Schauder fixed-point theorem. The existence of a
solution to the linearized problem is achieved via a regularization
technique. Here, we largely follow the footsteps this work, but we
treat the linearized problem with a flow map constructed out of a
continuous velocity filed.

Given $\psib\in
L^\infty(0,T; C^1(\Mbar))$, we compute the updated $\psib'\in
L^\infty(0,T; V)$ via
\begin{multline}
  \label{eq:b48}
  -\int_\M (\Delta\psib_0 + L\psib_0)\cdot\phib(\xb,0) dx - \int_0^T\int_\M
  (\Delta\psib' + L\psib')\cdot\dfrac{\p\phib}{\p t}dx dt \\
   - \int_0^T\int_\M (\Delta\psib' + L
   \psib')\cdot\nabla^\perp\psib\cdot\nabla\phib dxdt = 
  \int_0^T\int_\M \fb\cdot\phib dxdt,
\end{multline}
for every $\phib\in C^\infty(Q_T)$ with $\phib|_\M =
\phib(\cdot,T) = 0$. 
If a solution $\psib'$ exists, then the weak problem
\eqref{eq:b48} defines a mapping 
\begin{equation*}
  \mathcal{S}:\, L^\infty(0,T; C^1(\Mbar) \longrightarrow
  L^\infty(0,T; V) \subset L^\infty(0,T; C^1(\Mbar)).
\end{equation*}
The plan then is to show that this map has a fixed point. Since
$\mathcal{S}$ maps $L^\infty(0,T; C^1(\Mbar))$ into $L^\infty(0,T;
V)$, this fixed point is a solution of the original weak problem
\eqref{problem}.  

\begin{lemma}\label{lem:exst-l}
 Let $T>0$ and $\fb\in L^\infty(Q_T)$. Then for each  $\psib\in
 L^\infty(0,T; C^1(\Mbar))$, the weak problem 
 \eqref{eq:b48} has at least one solution $\psi'\in L^\infty(0,T; V)$. 
\end{lemma}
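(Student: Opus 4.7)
The overall strategy is to reduce \eqref{eq:b48} to a linear transport problem for the potential vorticity $\qb' := \Delta\psib' + L\psib'$. Because $U = \nabla^\perp\psib$ is divergence-free, the weak identity \eqref{eq:b48} is the distributional form of
\begin{equation*}
\p_t \qb' + U\cdot\nabla\qb' = \fb, \qquad \qb'(\cdot,0) = \Delta\psib_0 + L\psib_0.
\end{equation*}
Once $\qb'\in L^\infty(Q_T)$ is in hand, Lemma \ref{lem:elliptic-reg} recovers a unique $\psib'(\cdot,t)\in V$ for each $t$, giving $\psib'\in L^\infty(0,T;V)$, which is what we need.

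To produce $\qb'$ I regularize. Let $\rho_\epsilon$ be a standard space-time mollifier and set $\psib_\epsilon = \rho_\epsilon * \psib$ (after a suitable extension of $\psib$ outside $Q_T$); then $U_\epsilon := \nabla^\perp\psib_\epsilon$ is smooth, globally Lipschitz in $\xb$, and uniformly bounded in $L^\infty(Q_T)$. After extending $\qb_0 := \Delta\psib_0 + L\psib_0$ and $\fb$ by zero to $\mathbb{R}^2$, the ODE $\dot X = U_\epsilon(t,X)$ generates a classical flow $X_\epsilon(\tau;t,\xb)$, and the characteristic formula
\begin{equation*}
\qb'_\epsilon(t,\xb) = \qb_0(X_\epsilon(0;t,\xb)) + \int_0^t \fb(\tau,X_\epsilon(\tau;t,\xb))\, d\tau
\end{equation*}
defines a smooth function satisfying $\p_t\qb'_\epsilon + U_\epsilon\cdot\nabla\qb'_\epsilon = \fb$ together with the uniform bound $\|\qb'_\epsilon\|_{L^\infty(Q_T)} \leq \|\qb_0\|_{L^\infty(\M)} + T\|\fb\|_{L^\infty(Q_T)}$. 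For each $t$, let $\psib'_\epsilon(\cdot,t)\in V$ solve \eqref{eq:10} with right-hand side $\qb'_\epsilon(\cdot,t)|_{\M}$. By Lemma \ref{lem:elliptic-reg}, the family $\{\psib'_\epsilon\}$ is uniformly bounded in $L^\infty(0,T;V)$ and in $L^\infty(0,T;W^{2,p}(\M))$ for every finite $p$.

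The next step is to pass to the limit. A subsequence gives $\psib'_\epsilon \overset{\ast}{\rightharpoonup} \psib'$ in $L^\infty(0,T;V)$, while $U_\epsilon\to U$ in $L^p(Q_T)$ for every finite $p$ by standard mollifier estimates. The classical transport equation for $\qb'_\epsilon$, tested against any $\phib\in C^\infty(Q_T)$ with $\phib|_{\p\M} = \phib(\cdot,T) = 0$, integrated by parts on $\M$ (the boundary term vanishes since $\phib|_{\p\M}=0$) and using $\nabla\cdot U_\epsilon = 0$, reproduces \eqref{eq:b48} with $\psib'$ replaced by $\psib'_\epsilon$. The first two integrals converge by the weak-$*$ convergence of $\qb'_\epsilon$, while the product term $\iint \qb'_\epsilon\, U_\epsilon\cdot\nabla\phib$ converges to its limit because $\qb'_\epsilon$ is bounded and weak-$*$ convergent, $U_\epsilon\to U$ strongly in $L^1(Q_T)$, and $\nabla\phib$ is bounded. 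The resulting identity is exactly \eqref{eq:b48}.

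The main obstacle is that the hypothesis $\psib\in L^\infty(0,T;C^1(\Mbar))$ does not force $U$ to be tangent to $\p\M$, so characteristics of $U_\epsilon$ may exit $\M$ and there is no natural way to transport $\qb_0$ along them while remaining inside the domain. Extending $\qb_0$ and $\fb$ by zero to $\mathbb{R}^2$ and constructing $\qb'_\epsilon$ globally sidesteps this difficulty; the elliptic recovery step then enforces the correct non-homogeneous boundary condition on $\psib'$ independently of any behavior of $\qb'_\epsilon$ near $\p\M$, and the uniform $L^\infty$ bound on $\qb'_\epsilon$ together with the $L^p_{\mathrm{loc}}$ convergence of $U_\epsilon$ prevents any pathological concentration across the boundary in the limit.
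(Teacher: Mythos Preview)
Your argument is essentially correct, but the paper takes a different and more direct route. Instead of regularizing, the paper constructs a single (generally non-unique) flow map $\Phi_t$ for the continuous field $U=\nabla^\perp\psib$ via Peano's theorem, after first establishing in the preceding Lemma~\ref{lem:flow} that such a flow remains in $\Mbar$ for all time thanks to the tangency condition $U\cdot\nb=0$ on $\p\M$. The candidate $\qb'$ is then written down explicitly by the characteristic formula \eqref{eq:b53}, and a direct change of variables (using that $\Phi_t$ is area-preserving) verifies the weak identity. No regularization, no compactness, no passage to the limit.

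The paper's concluding section actually flags this choice: the regularization-plus-limit technique you employ is Yudovich's original device, and the paper deliberately replaces it with the Peano flow-map construction. Your route is legitimate and in one respect more robust---it does not rely on the tangency of $U$, which, as you correctly observe, is not literally part of the lemma's hypotheses (though it is implicitly available in the fixed-point iteration since every input $\psib$ is constant on $\p\M$). It also sidesteps the somewhat delicate question of whether a mere Peano selection of the flow is area-preserving. The paper's route, on the other hand, is shorter and yields an explicit formula for $\qb'$ rather than an abstract weak-$*$ limit.
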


The basic idea for the proof of Lemma \ref{lem:exst-l} is to show that
its solution is the weak solution of the transport equation
\begin{equation}
  \label{eq:b50}
  \left\{
    \begin{aligned}
      &\dfrac{\p}{\p t}\qb' + \ub\cdot\nabla \qb' = f,\\
      & \qb'(\xb,0) = \qb_0(\xb).
    \end{aligned}\right.
\end{equation}
The weak solution of \eqref{eq:b50} can be constructed with a flow map
$\Phi_t(\ab)$ determined from the velocity field $\ub$,
\begin{equation}
  \label{eq:b52}
  \left\{
    \begin{aligned}
      &\dfrac{d}{dt}\Phi_t(\ab) = \ub(\Phi_t(\ab), t),\qquad t>0,\\
      &\Phi_0(\ab) = \ab.
    \end{aligned}\right.
\end{equation}
We now show that, given an continuous velocity field within the domain
$\M$, there exists at least one flow map $\Phi_t(\ab)$ satisfying
\eqref{eq:b52}. 

\begin{lemma}\label{lem:flow}
Assume that $\p\M\in C^2$, and let $T>0$ be arbitrary and $\ub\in
L^\infty(0,T; C(\Mbar))$ with 
$\ub\cdot\nb = 0$ on $\p\M$. Then the initial value problem
\eqref{eq:b52} has at least one solution $\Phi_t(\ab)$ that is valid
over $[0,\,T]$. 
\end{lemma}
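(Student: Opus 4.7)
The plan is to prove existence via a Peano/Cauchy–Peano type argument, since $\ub$ is only continuous in $\xb$ (not Lipschitz) and uniqueness should not be expected. The main steps are: (i) smooth approximation of $\ub$ by vector fields $\ub_n$ that are Lipschitz in space and respect the no-flux condition on $\p\M$; (ii) use of standard Picard--Lindel\"of theory to get unique global flow maps $\Phi^n_t$ for the approximating problems; (iii) extraction of a uniform limit via Arzel\`a--Ascoli; and (iv) passage to the limit in the integral equation.

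First, I would construct the approximations $\ub_n$. Since $\p\M\in C^2$, a tubular neighborhood of $\p\M$ exists, and one can mollify $\ub$ in space (after a suitable extension) and then apply a smooth cut-off/projection onto the tangent bundle of $\p\M$ near the boundary so that $\ub_n\cdot\nb=0$ on $\p\M$ and $\ub_n\to\ub$ uniformly on $\Mbar$ (for a.e.~$t$), with $\|\ub_n\|_{L^\infty(Q_T)}\le C\|\ub\|_{L^\infty(0,T;C(\Mbar))}$. For each $n$, the vector field $\ub_n$ is Lipschitz in $\xb$ and essentially bounded, so by the Picard--Lindel\"of theorem the ODE
\begin{equation*}
  \dfrac{d}{dt}\Phi_t^n(\ab) = \ub_n(\Phi_t^n(\ab),t),\qquad \Phi_0^n(\ab)=\ab,
\end{equation*}
has a unique solution. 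The no-flux condition $\ub_n\cdot\nb=0$ on $\p\M$ prevents the trajectory from exiting $\Mbar$: any boundary contact is tangential, so $\Phi_t^n(\ab)\in\Mbar$ for all $t\in[0,T]$, and finite-time blow-up is ruled out by compactness of $\Mbar$.

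Next, from $|d\Phi_t^n/dt|\le C$ uniformly in $n$ and $\ab$, the family $\{\Phi_\cdot^n(\ab)\}_n$ is uniformly Lipschitz in $t$, hence equicontinuous, and uniformly bounded (contained in $\Mbar$). By Arzel\`a--Ascoli there is a subsequence $\Phi_t^{n_k}(\ab)$ that converges uniformly on $[0,T]$ to some $\Phi_t(\ab)\in\Mbar$. I then pass to the limit in the integral form
\begin{equation*}
  \Phi_t^{n_k}(\ab) = \ab + \int_0^t \ub_{n_k}(\Phi_s^{n_k}(\ab),s)\,ds,
\end{equation*}
using uniform convergence $\ub_{n_k}\to\ub$ on $\Mbar$ together with the uniform continuity of $\ub(\cdot,s)$ on $\Mbar$ and the dominated convergence theorem. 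This yields $\Phi_t(\ab)=\ab+\int_0^t\ub(\Phi_s(\ab),s)\,ds$, so $\Phi_t(\ab)$ solves \eqref{eq:b52} in the integral sense, and the right-hand side is continuous in $t$, hence $\Phi_t(\ab)$ is differentiable a.e.

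The main obstacle I anticipate is step~(i): smoothing $\ub$ while preserving the no-flux condition exactly on $\p\M$. A naive mollification destroys the boundary condition, and allowing $\ub_n\cdot\nb\ne 0$ at $\p\M$ would let the approximate trajectories escape $\Mbar$, complicating the limit. The remedy is to first reflect/extend $\ub$ across $\p\M$ in a tubular neighborhood so that only the tangential component survives on $\p\M$ (so extended $\ub$ remains continuous and has vanishing normal trace), then mollify, and finally multiply by a smooth tangential projector that equals the identity on $\p\M$ and the identity away from a small neighborhood. Alternatively one can bypass this issue by establishing existence directly from a polygonal Euler scheme on $\Mbar$, using the no-flux condition to reflect approximate trajectories that would cross $\p\M$ and showing the reflections vanish in the limit; either approach reduces the problem to a routine compactness argument.
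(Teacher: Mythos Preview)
Your approximation--compactness strategy is sound and would yield the lemma, but it is a genuinely different route from the paper. The paper's proof is more direct and avoids constructing approximating fields altogether: for an interior starting point it simply invokes the Peano theorem and extends the trajectory until (if ever) it hits $\p\M$; since $\ub$ is bounded, the trajectory is Lipschitz in $t$, so a limit point on $\p\M$ exists if the interior extension terminates before time $T$. The crux of the paper's argument is then to produce, through any boundary point, at least one trajectory that stays on $\p\M$. This is done by locally parameterizing the $C^2$ curve $\p\M$ by a scalar parameter $\tau$, writing the tangential velocity as $\ub=\sigma(\tau,t)\,\bs{b}'(\tau)$, and observing that $\tau(\tau_0,t)$ satisfies the scalar ODE $\dfrac{d\tau}{dt}=\sigma(\tau,t)$, to which Peano applies directly. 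Concatenating the interior trajectory with this boundary trajectory gives a global solution on $[0,T]$.

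Compared with your approach, the paper's argument is shorter and sidesteps precisely the obstacle you identified as most delicate: manufacturing Lipschitz approximations $\ub_n$ that exactly satisfy $\ub_n\cdot\nb=0$ on $\p\M$. Your method, on the other hand, is more systematic and would extend more readily to higher-dimensional boundaries or settings where a one-parameter boundary reduction is unavailable; it also makes the invariance of $\Mbar$ under the approximate flows a consequence of uniqueness for Lipschitz fields, rather than an explicit boundary construction. Both proofs ultimately rest on Peano-type existence, but the paper localizes it to a one-dimensional ODE on $\p\M$, whereas you globalize it through smoothing and Arzel\`a--Ascoli.
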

\begin{proof}
  For each interior point, a solution can be constructed by the Peano
  method. The solution can be extended by the same method as long as
  it has not reached the boundary $\p\M$. Hence the proof is complete
  once it is shown that, starting from the a point on the boundary,
  there exists at least one solution $\Phi_t(\ab)$ that remains on
  $\p\M$ for all time. 

The boundary $\p\M$ is $C^2$ smooth. Then, locally, it can be
parameterized by a single parameter $\tau\in I$. We let $\bs{b}(\tau)$
be a vector-valued function representing the boundary. By assumption,
$\bs{b}(\tau)\in C^2(I)$. If this parameterization of $\p\M$ is only
local, then one can cover the entire $\p\M$ with a finite number of
patches, each of which is parameterized by a single parameter. We now
show that, starting from any point $\ab\in\bs{b}(I)$, there exists at
least one solution $\Phi_t(\ab)$ that remains in $\bs{b}(I)$ either
for all time   or until it exits from one of the end points of
$\bs{b}(I)$. The velocity field on the boundary can also be expressed
using the parameter $\tau$, 
\begin{equation*}
  \ub = \ub(\tau, t). 
\end{equation*}
By assumption, $\ub(\tau, t)$ is parallel to the tangential vector on
the boundary, and the following relation holds,
\begin{equation*}
 \ub(\tau, t) = \sigma(\tau, t)\bs{b}'(\tau),
\end{equation*}
for some scalar function $\sigma(\tau, t)$  that is continuous in
$\tau$, and bounded in $t$. We look for a solution of \eqref{eq:b52}
in the form of 
\begin{equation*}
  \Phi_t(\tau_0) = b(\tau(\tau_0, t)).
\end{equation*}
We let $\tau(\tau_0,t)$ be such that 
\begin{equation}\label{eq:44}
  \left\{
    \begin{aligned}
      &\tau(\tau_0, 0) = \tau_0,\\
      &\dfrac{d}{dt} \tau(\tau_0, t) = \sigma(\tau(\tau_0, t), t). 
    \end{aligned}\right.
\end{equation}
Then it is
easy to check that $\Phi_t(\tau_0)$ solves the initial-value problem
\eqref{eq:b52}, for 
\begin{equation*}
    \begin{aligned}
      &\Phi_0(\tau_0) = \bs{b}(\tau_0),\\
      &\dfrac{d}{dt}\Phi_t(\tau_0) =
      \bs{b}'(\tau(\tau_0,t))\cdot\sigma(\tau(\tau_0, t),
      t)=\ub(\tau(\tau_0,t), t). 
    \end{aligned}
\end{equation*}
The existence of a solution for the duration of $\tau(\tau_0,t)\in I$
is just another application of the Peano method. 
\end{proof}

We now prove the existence of a solution to the linearized equation
\eqref{eq:b48}.
\begin{proof}[Proof of Lemma \ref{lem:exst-l}]
We let $\Phi_t(\cdot)$ be a global solution of the initial value
problem \eqref{eq:b52}. We define a new QGPV $q'$
from the given initial state $q_0$ via
\begin{equation}
  \label{eq:b53}
  \qb'(\xb,t) = \qb_0(\Phi_{-t}(\xb)) + \int_0^t \fb(\Phi_{s-t}(\xb),s)ds,
  \quad \forall \xb\in\M,\,t>0,
\end{equation}
or equivalently, 
\begin{equation}
  \label{eq:b53a}
  \qb'(\Phi_t(\ab),t) = \qb_0(\ab) + \int_0^t \fb(\Phi_{s}(\ab),s)ds,
  \quad \forall \ab\in\M,\,t>0.
\end{equation}
It is clear that, given $\fb\in L^\infty(Q_T)$, $\qb'\in
L^\infty(Q_T)$. 
We now verify that $\qb'(\xb,t)$ defined in \eqref{eq:b53} solves the
transport equation \eqref{eq:b50} in the weak sense. We denote $\ub
\equiv \nabla^\perp\psib$, and let $\phib\in
C^\infty(Q_T)$ with $\phib|_{\p\M} = \phib(\cdot, T) = 0$. Using the
fact that the map $\xb = \Phi_t(\ab)$ is area preserving, we derives that
\begin{align*}
  &\int_0^T\int_\M q'(\xb,t)\left(\dfrac{\p\phib}{\p t} +
    \ub\cdot\nabla\phib\right) dxdt \\
  =& \int_0^T\int_\M \left(q_0(\ab) + \int_0^t
    f(\Phi_s(\ab),s)ds\right) \dfrac{d}{dt}\phib(\Phi_t(a),t)dadt \\
  =& -\int_\M q_0(\ab)\phib(\ab,0)da - \int_0^T\int_\M
  f(\Phi_t(\ab),t)\phib(\Phi_t(\ab),t)dadt \\
  =& -\int_\M q_0(\xb)\phib(\xb,0)dx - \int_0^T\int_\M
  f(\xb,t)\phib(\xb,t) dx dt.
\end{align*}
Hence, we have shown that
\begin{multline*}
-\int_\M q_0(\xb)\phib(\xb,0)dx - \int_0^T\int_\M q'(\xb,t)\left(\dfrac{\p\phib}{\p t} +
    \ub\cdot\nabla\phib\right) dxdt 
  =  \\
\int_0^T\int_\M
  f(\xb,t)\phib(\xb,t) dx dt.
\end{multline*}
We let $\psib'$ be a solution to the boundary value problem
\eqref{eq:10} corresponding to the QGPV $\qb'$. Then $\psib'\in
L^\infty(0,T; V)$ and it is a solution to the weak problem
\eqref{eq:b48}. 
\end{proof}

\begin{lemma}\label{lem-estim-l}
  The solution $\psi'$ to the weak problem has the following
  estimates,
  \begin{align}
    |\psib'|_{L^\infty(0,T; V)} &\leq |\psib_0|_V + |\fb|_{L^1(0,T;
      L^\infty(\M))}, \label{eq:b62}\\
   \max_{0\leq t\leq T}\left|\dfrac{\p\psib'}{\p
       t}\right|_{W^{1,p}(\M)} &\leq C p\max_{0\leq t\leq T}\left(
     |F|_{L^p} + |\Delta\psib' + L\psib'|_{L^\infty(Q_T)}
     |\nabla\psib|_{L^p(\M)}\right).\label{eq:b63} 
  \end{align}
\end{lemma}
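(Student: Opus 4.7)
The plan is to obtain the two estimates separately, each by reading off information from the explicit construction of $\psib'$ in the proof of Lemma \ref{lem:exst-l} and then invoking the regularity results already available for the elliptic boundary value problem.

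For the first estimate, I would start from the representation formula
\begin{equation*}
  \qb'(\xb,t) = \qb_0(\Phi_{-t}(\xb)) + \int_0^t \fb(\Phi_{s-t}(\xb),s)\,ds,
\end{equation*}
where $\qb' = \Delta\psib' + L\psib'$ and $\Phi_t$ is the flow map produced by Lemma \ref{lem:flow}. Because $\ub = \nabla^\perp\psib$ is divergence-free and tangential to $\p\M$, the flow $\Phi_t$ maps $\M$ bijectively onto itself (in particular it is area-preserving), so composition with $\Phi_{-t}$ or $\Phi_{s-t}$ preserves the $L^\infty(\M)$ norm. Taking the essential supremum in $\xb$ on both sides, and then the supremum in $t\in[0,T]$, yields
\begin{equation*}
  \sup_{0<t<T}\|\qb'(\cdot,t)\|_{L^\infty(\M)} \leq \|\qb_0\|_{L^\infty(\M)} + \|\fb\|_{L^1(0,T;L^\infty(\M))}.
\end{equation*}
Since the $V$-norm is by definition $\|\psib'\|_V = \|\Delta\psib' + L\psib'\|_{L^\infty(\M)} = \|\qb'\|_{L^\infty(\M)}$ and likewise $\|\psib_0\|_V = \|\qb_0\|_{L^\infty(\M)}$, this is precisely \eqref{eq:b62}.

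For the second estimate, I would repeat the argument used in the proof of Lemma \ref{lem-t-deriv}, but applied to the linearized equation rather than the full nonlinear one. Differentiating the transport equation \eqref{eq:b50} satisfied by $\qb' = \Delta\psib' + L\psib'$ in the sense of distributions gives
\begin{equation*}
  (\Delta + L)\dfrac{\p\psib'}{\p t} = \fb - \nabla^\perp\psib\cdot\nabla(\Delta\psib' + L\psib') = \fb - \nabla\cdot\bigl(\nabla^\perp\psib\,(\Delta\psib' + L\psib')\bigr),
\end{equation*}
where the last equality uses $\nabla\cdot\nabla^\perp\psib = 0$. This puts the right-hand side in the divergence form required by Lemma \ref{lem:elliptic-reg1} (with $\bs{g} = \fb - \nabla^\perp\psib\,(\Delta\psib' + L\psib')$ or, more precisely, with the forcing $\fb$ treated as a zeroth-order contribution and the second piece as the divergence part). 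Applying that lemma yields, for a.e.\ $t$,
\begin{equation*}
  \left\|\dfrac{\p\psib'}{\p t}\right\|_{W^{1,p}(\M)} \leq Cp\Bigl(\|\fb\|_{L^p(\M)} + \|\Delta\psib' + L\psib'\|_{L^\infty(\M)}\,\|\nabla\psib\|_{L^p(\M)}\Bigr).
\end{equation*}
Taking the maximum in $t\in[0,T]$ on both sides and bounding $\|\Delta\psib'+L\psib'\|_{L^\infty(\M)}$ by its $L^\infty(Q_T)$ norm gives \eqref{eq:b63}.

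Neither estimate involves a genuine obstacle; the main step to be careful about is verifying that the vector field $\ub = \nabla^\perp\psib$ really does generate an area-preserving flow on $\M$, so that the $L^\infty$ norm in step one is preserved under pullback by $\Phi_{-t}$. This uses the no-flux condition and $\nabla\cdot\nabla^\perp\psib = 0$, both of which are in force here. The rest is bookkeeping with the elliptic regularity lemmas already established.
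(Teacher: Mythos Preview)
Your proposal is correct and follows essentially the same route as the paper: for \eqref{eq:b62} the paper also reads off the $L^\infty$ bound on $\qb'$ directly from the representation formula \eqref{eq:b53} and then invokes the definition of the $V$-norm, and for \eqref{eq:b63} the paper simply says the estimate ``can be established in a similar way as in Lemma~\ref{lem-t-deriv},'' which is exactly the argument you spell out. One small remark: for the $L^\infty$ bound you do not actually need the area-preserving property of $\Phi_t$, only that $\Phi_t$ maps $\M$ into $\M$; also, to match the $|F|_{L^p}$ appearing in \eqref{eq:b63} you should write $\fb = \nabla\times F$ before applying Lemma~\ref{lem:elliptic-reg1}, as is done in \eqref{eq:b29}.
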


\begin{proof}
  We start from the equation \eqref{eq:b53},
  \begin{align*}
    |q'(\cdot,t)|_{L^\infty(\M)} &\leq |q_0|_{L^\infty(\M)} + \int_0^t
    |f(\cdot,s)|_{L^\infty(\M)} ds,\\
    |q'|_{L^\infty(Q_T)} &\leq |q_0|_{L^\infty(\M)} + \int_0^T
    |f(\cdot,s)|_{L^\infty(\M)} ds\\
    &= |q_0|_{L^\infty(\M)} + |f|_{L^1(0,T; L^\infty(\M))}. 
  \end{align*}
The inequality \eqref{eq:b62} follows. 

The inequality \eqref{eq:b63} can be established in a similar way as in
Lemma \ref{lem-t-deriv}. 
\end{proof}

\begin{lemma}\label{lem-compact}
  The mapping 
  \begin{equation*}
    \psib \longrightarrow \psib'\qquad\textrm{ in }L^\infty(0,T; C^1(\overline\M))
  \end{equation*}
is compact.
\end{lemma}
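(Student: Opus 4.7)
The plan is to apply the Arzel\`a--Ascoli theorem for Banach-valued continuous functions. Given any bounded set $B\subset L^\infty(0,T; C^1(\Mbar))$, the goal is to show that $\mathcal{S}(B)$ sits in $C([0,T]; C^1(\Mbar))$ and is there (i) pointwise relatively compact in $C^1(\Mbar)$ at each $t\in[0,T]$, and (ii) uniformly equicontinuous in $t$ with values in $C^1(\Mbar)$. Relative compactness in $C([0,T]; C^1(\Mbar))$ then gives relative compactness in $L^\infty(0,T; C^1(\Mbar))$, which is precisely what is needed.

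For (i), estimate \eqref{eq:b62} yields $\|\psib'\|_{L^\infty(0,T;V)}\leq |\psib_0|_V + |\fb|_{L^1(0,T;L^\infty(\M))}$, a bound independent of the input $\psib\in B$. Fixing any $\gamma\in(0,1)$, the H\"older estimate \eqref{eq:hold} of Lemma \ref{lem:elliptic-reg} supplies a uniform $C^{0,\gamma}(\Mbar)$ bound on $\nabla\psib'(\cdot,t)$; the constant boundary value together with the zero-mean constraint \eqref{eq:8a} then controls $\|\psib'(\cdot,t)\|_{L^\infty(\M)}$ in terms of $\|\nabla\psib'(\cdot,t)\|_{L^\infty(\M)}$, giving a uniform $C^{1,\gamma}(\Mbar)$ bound on $\psib'(\cdot,t)$. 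Since $C^{1,\gamma}(\Mbar)\hookrightarrow C^1(\Mbar)$ is compact, (i) follows.

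For (ii), fix some $p>2$ and invoke \eqref{eq:b63}. For $\psib\in B$, the quantity $|\nabla\psib|_{L^p(\M)}$ is uniformly bounded by $|\M|^{1/p}\,|\nabla\psib|_{L^\infty(\M)}$, and $|\Delta\psib'+L\psib'|_{L^\infty(Q_T)}$ is uniformly bounded by step (i). Hence $\|\p_t\psib'\|_{L^\infty(0,T; W^{1,p}(\M))}$ is bounded uniformly over $\psib\in B$, and a direct integration in time yields
\begin{equation*}
  \|\psib'(\cdot,t_1)-\psib'(\cdot,t_2)\|_{W^{1,p}(\M)}\leq C\,|t_1-t_2|.
\end{equation*}
The hard step is to upgrade this $W^{1,p}$-equicontinuity to $C^1$-equicontinuity. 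The plan here is to interpolate: the gradient difference $g:=\nabla\psib'(\cdot,t_1)-\nabla\psib'(\cdot,t_2)$ is simultaneously uniformly bounded in $C^{0,\gamma}(\Mbar)$ by (i) and of size $O(|t_1-t_2|)$ in $L^p(\M)$. A standard Morrey-type interpolation, namely $\|g\|_{C^0(\Mbar)}\leq C\,\|g\|_{C^{0,\gamma}(\Mbar)}^{\theta}\,\|g\|_{L^p(\M)}^{1-\theta}$ for a suitable $\theta\in(0,1)$ (proved by noting that if $|g(x_0)|$ is large then $g$ remains large on a H\"older ball whose radius is controlled from below, forcing the $L^p$-norm to be large), then converts smallness in $L^p$ into smallness in $C^0$, uniformly over $B$. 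The analogous estimate applied directly to $\psib'(\cdot,t_1)-\psib'(\cdot,t_2)$ disposes of the $C^0$-part of the $C^1$-norm, so (ii) holds, and Arzel\`a--Ascoli closes the argument.
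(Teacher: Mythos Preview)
Your argument is correct but follows a different route from the paper's.  The paper treats the space--time cylinder $Q_T$ directly: from \eqref{eq:b62} and \eqref{eq:b63} it gets $\|\partial\psib'_r/\partial x\|_{W^{1,p}(Q_T)}\leq C$ for every $p>1$, and then, taking $p>3=\dim Q_T$, invokes the Sobolev embedding $W^{1,p}(Q_T)\hookrightarrow C^{0,\lambda}(\overline{Q_T})$ to conclude that $\{\partial\psib'_r/\partial x\}$ (and hence $\{\psib'_r\}$) is equicontinuous on $\overline{Q_T}$; the scalar Arzel\`a--Ascoli theorem then finishes.  Your version instead decouples space and time via the vector-valued Arzel\`a--Ascoli theorem: pointwise compactness in $C^1(\Mbar)$ comes from the uniform $C^{1,\gamma}$ bound, and temporal equicontinuity comes from interpolating the $W^{1,p}$-Lipschitz estimate in $t$ against the uniform $C^{0,\gamma}$ bound on the gradients.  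The paper's path is shorter and avoids the ad hoc interpolation step, while yours is more modular and makes explicit why the $C^{1,\gamma}$ spatial regularity is doing double duty (compactness at fixed $t$ \emph{and} the upgrade from $L^p$-equicontinuity to $C^0$-equicontinuity).  Either way the essential inputs are the same two estimates from Lemma~\ref{lem-estim-l}.
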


\begin{proof}
  We let $\{\psib_r\}$ be a bounded sequence in $L^\infty(0,T;
  C^1(\overline\M))$, and let $\psib_r'$ be the corresponding sequence of
  solutions of \eqref{eq:b48}. Then, by \eqref{eq:b62} and
  \eqref{eq:b63}, we obtain, for $\forall\,p>1$, 
  \begin{equation*}
    \max_t \left|\dfrac{\p^2\psib'_r}{\p t\p x}\right|_{L^p(\M)} \leq C,
  \end{equation*}
where $C$ is a constant independent of the index $r$. Thus
$\p\psi'_r/\p x$ has one generalized derivative in both $t$ and $x$,
and 
\begin{equation*}
 \left\|\dfrac{\p\psib'_r}{\p x}\right\|_{W^{1,p}(Q_T)} \leq C,\qquad
 \forall\, p> 1.
\end{equation*}
For $p>3$, and by the Sobolev embedding theorem, $\p\psi'_r/\p x$ is
Hold\"er continuous in the temporal-spatial domain $Q_T$, and
\begin{equation*}
 \left\|\dfrac{\p\psib'_r}{\p x}\right\|_{C^{0,\lambda}(Q_T)} \leq C,
\end{equation*}
for some $0<\lambda < 1$. This shows that $\p\psib'_r/\p x$ are
equi-continuous in $Q_T$, and so is $\psi'_r$. By the Arzel\'a-Ascoli
theorem, there exists a subsequence, still denoted by the index $r$,
such that 
\begin{align*}
  \psib'_r &\longrightarrow \psib,\\
  \dfrac{\p\psib'_r}{\p x} &\longrightarrow \phib.
\end{align*}
Due to the completeness of the Banach space $L^\infty(0,T; C^1(\overline\M))$,
we have that $\psib\in L^\infty(0,T; C^1(\overline\M))$ and 
$\phib = \p\psib/\p
x$. 
\end{proof}

\begin{theorem}\label{thm-existence}
  There exists a solution to the weak problem \eqref{problem} in
  $L^\infty(0,T; V)$. 
\end{theorem}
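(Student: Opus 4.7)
The plan is to obtain a fixed point of the solution map $\mathcal{S}:\psib\mapsto\psib'$ defined through the linearized weak problem \eqref{eq:b48} by means of the Schauder fixed-point theorem. Since $\mathcal{S}$ sends $L^\infty(0,T; C^1(\Mbar))$ into $L^\infty(0,T;V)$, and since $L^\infty(0,T;V)\hookrightarrow L^\infty(0,T; C^1(\Mbar))$ by virtue of the H\"older and $W^{2,p}$ estimates in Lemma \ref{lem:elliptic-reg}, any fixed point of $\mathcal{S}$ automatically lies in $L^\infty(0,T; V)$ and solves the weak problem \eqref{problem}.

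Setting $R := \|\psib_0\|_V + \|\fb\|_{L^1(0,T; L^\infty(\M))}$, I would work with the set
\begin{equation*}
K := \left\{ \psib\in L^\infty(0,T; C^1(\Mbar)) : \|\Delta\psib + L\psib\|_{L^\infty(Q_T)}\leq R\right\},
\end{equation*}
which is convex and closed in $L^\infty(0,T; C^1(\Mbar))$ (a $C^1$-limit of $\psib_n\in K$ satisfies $\Delta\psib_n + L\psib_n \rightharpoonup^\ast \Delta\psib + L\psib$ in $L^\infty$, and the bound $R$ passes through by weak-$\star$ lower semicontinuity). The invariance $\mathcal{S}(K)\subset K$ is immediate from the \emph{a priori} estimate \eqref{eq:b62}, while the relative compactness of $\mathcal{S}(K)$ in $L^\infty(0,T; C^1(\Mbar))$ is exactly what Lemma \ref{lem-compact} provides.

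What remains is continuity of $\mathcal{S}$ on $K$. Given $\psib_n\to\psib$ in $L^\infty(0,T; C^1(\Mbar))$ with all iterates in $K$, the velocity fields $\ub_n=\nabla^\perp\psib_n$ converge uniformly on $\overline{Q_T}$. By the Peano/tangency argument of Lemma \ref{lem:flow}, one extracts a subsequence of flow maps $\Phi^n_t(\ab)$ that converges uniformly on $[0,T]\times\Mbar$ to a flow map $\Phi_t(\ab)$ driven by $\ub$. The explicit formula \eqref{eq:b53} then yields $\qb'_n\to\qb'$ pointwise a.e., and dominated convergence upgrades this to convergence in every $L^p(\M)$, $p<\infty$. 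Applying the inverse elliptic operator and invoking Lemmas \ref{lem:elliptic-reg} and \ref{lem:elliptic-reg1}, I would conclude that $\psib'_n=\mathcal{S}(\psib_n)\to\psib'=\mathcal{S}(\psib)$ in $L^\infty(0,T; C^1(\Mbar))$.

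The main obstacle is precisely this last continuity step, because the flow map of a merely continuous velocity field is generally not unique, so the extracted limit $\Phi_t$ is a priori only one of possibly many flows for $\ub$. To handle this, I would verify that \emph{every} subsequential limit $\Phi_t$ produces, through \eqref{eq:b53}, the same $\qb'$ in the sense of distributions, by showing directly that $\qb'$ is characterized as the unique weak solution of the linearized transport equation \eqref{eq:b50}; this ensures the full sequence $\mathcal{S}(\psib_n)$ converges, not just a subsequence. Once continuity is in hand, Schauder's theorem applied to $\mathcal{S}:K\to K$ produces a fixed point $\psib^\ast\in K\cap L^\infty(0,T;V)$, delivering the desired weak solution.
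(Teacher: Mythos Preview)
Your strategy coincides with the paper's: apply Schauder's fixed-point theorem to $\mathcal{S}$ on $L^\infty(0,T;C^1(\Mbar))$, using Lemma \ref{lem-compact} for compactness and Lemma \ref{lem-estim-l} for the invariant bound. In fact you are more thorough than the paper, whose proof of Theorem \ref{thm-existence} consists of two sentences and never verifies continuity of $\mathcal{S}$; the concluding remarks even acknowledge that the linearized solution map is multi-valued (non-unique flow for a merely continuous field), without explaining how Schauder---which requires a single-valued continuous map---still applies. You are right to flag this as the crux.

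There is, however, a gap in your proposed resolution. You plan to show that every subsequential limit flow $\Phi_t$ yields the same $\qb'$ by appealing to \emph{uniqueness of the weak solution} of the linear transport problem \eqref{eq:b50}. For a velocity that is only continuous this uniqueness is false in general (this is precisely the DiPerna--Lions obstruction the paper itself cites), so the argument as written does not close. What actually rescues you is the extra regularity built into your set $K$: any $\psib\in K$ lies in $L^\infty(0,T;V)$, so by Lemma \ref{lem:elliptic-reg} the velocity $\ub=\nabla^\perp\psib$ is quasi-Lipschitz (log-Lipschitz), and by the Osgood criterion the flow map is \emph{unique} and depends continuously on $\ub$ in the uniform topology. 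With that in hand, formula \eqref{eq:b53} defines $\mathcal{S}$ unambiguously on $K$ and the convergence $\qb'_n\to\qb'$ follows directly from uniform convergence of the flows, without needing a separate uniqueness theorem for \eqref{eq:b50}. Once you make the quasi-Lipschitz regularity explicit, the continuity step---and hence the whole argument---goes through.
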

\begin{proof}
  The mapping 
  \begin{equation*}
    \mathcal{S}:\, \psib\longrightarrow \psi'
  \end{equation*}
is compact in $L^\infty(0,T; C^1(\overline\M))$. By Schauder's fixed point
theorem, it has a fixed point $\psib$ in the same function space. Since
$\mathcal{S}$ maps from 
$L^\infty(0,T; C^1(\overline\M))$ into $L^\infty(0,T; V)$, the fixed point
$\psib$ belongs to $L^\infty(0,T; V)$ as well. 
\end{proof}

\begin{theorem}\label{thm:reg}
  Let $\fb = \nabla\times F$ be bounded and $F$ be strongly continuous
  in time $t$. Then 
  the initial and boundary conditions \eqref{eq:6}--\eqref{eq:9} are
  satisfied  in the classical sense, and $\Delta\psib$, $\p^2\psi/\p
  x\p t$ are strongly continuous with respect to $t$ on $[0,T]$ in
  $\lpm$ for any $p>1$. 
\end{theorem}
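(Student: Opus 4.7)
The plan is to exploit the Lagrangian representation \eqref{eq:b53} produced in the proof of Lemma \ref{lem:exst-l}, apply it to the fixed point $\psib$ supplied by Theorem \ref{thm-existence}, transfer strong time regularity from the QGPV $\qb$ to $\Delta\psib$ via the $L^p$ elliptic theory of Section \ref{s:bvp}, and finally recover the mixed derivative $\p^2\psib/\p x\p t$ from Lemma \ref{lem-t-deriv}.

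By Lemma \ref{lem:elliptic-reg}, the unique weak solution $\psib\in L^\infty(0,T;V)$ produces a velocity field $\ub=\nabla^\perp\psib\in L^\infty(0,T;C^{0,\gamma}(\Mbar))$ for every $\gamma<1$; moreover $\ub$ is divergence-free and, since each $\psi_i$ is constant on $\p\M$, tangent to the boundary. Lemma \ref{lem:flow} then produces a flow map $\Phi_t$ continuous in $(t,\ab)$ and measure-preserving. Re-running the construction of Lemma \ref{lem:exst-l} with $\psib$ itself yields the Lagrangian identity
\begin{equation*}
\qb(\xb,t)=\qb_0(\Phi_{-t}(\xb))+\int_0^t \fb(\Phi_{s-t}(\xb),s)\,ds,
\end{equation*}
which in particular asserts the transport equation \eqref{eq:6} pointwise along every particle trajectory.

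Next I would upgrade this identity to strong continuity of $\qb$ in $L^p(\M)$ for every finite $p$. Density of $C(\Mbar)$ in $L^p(\M)$ combined with the measure-preserving property of $\Phi_t$ gives $t\mapsto \qb_0\circ\Phi_{-t}\in C([0,T];L^p(\M))$; the same density argument, together with the assumed strong continuity of $F$ (and hence $\fb$) in $t$, controls the forcing integral. Standard $L^p$ elliptic regularity applied to \eqref{eq:10} then upgrades this to $\psib\in C([0,T];W^{2,p}(\M))$, so that $\Delta\psib\in C([0,T];L^p(\M))$ and, for $p>2$, $\psib\in C(\Mbar\times[0,T])$. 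This joint continuity promotes the weak identifications of Lemma \ref{lem:weak-cont} to the classical statements in \eqref{eq:8}--\eqref{eq:9}: the boundary condition $\psib(\cdot,t)=\lb(t)$ on $\p\M$ and the zero-mean constraint hold pointwise in $t$, and $\psib(\cdot,0)=\psib_0$ in the uniform sense.

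For the mixed derivative, distributional differentiation of \eqref{eq:b20}, as carried out in \eqref{eq:b29}, gives
\begin{equation*}
(\Delta+L)\dfrac{\p\psib}{\p t}=\nabla\times F-\nabla\cdot\bigl(\nabla^\perp\psib\cdot\qb\bigr),
\end{equation*}
i.e.\ a divergence of the vector field $\bs{g}=F-\nabla^\perp\psib\cdot\qb$. Since $\qb\in C([0,T];L^p(\M))$ by the previous paragraph and $\nabla^\perp\psib$ is essentially bounded, $\bs{g}\in C([0,T];L^p(\M))$, and Lemma \ref{lem:elliptic-reg1} delivers $\p\psib/\p t\in C([0,T];W^{1,p}(\M))$, which is the claimed strong $L^p$-continuity of $\p^2\psib/\p x\p t$. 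The main obstacle is the rigorous justification of the strong $L^p$-continuity of the Lagrangian composition $\qb_0\circ\Phi_{-t}$: since the velocity is only H\"older and quasi-Lipschitz, $\Phi_t$ need not be bi-Lipschitz and several admissible flows may coexist, but the measure-preservation of any one such flow is enough to run the density-approximation argument and make the compositional continuity intrinsic. A secondary subtlety, passing from the weak initial trace of Lemma \ref{lem:weak-cont} to the classical initial condition, is closed by the $W^{2,p}$-continuity above.
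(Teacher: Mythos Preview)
Your proposal is correct and follows the same architecture as the paper: establish strong $L^p$-continuity of $\qb$ in time, transfer it to $\Delta\psib$ via Lemma \ref{lem:elliptic-reg}, and then invoke \eqref{eq:b29} together with Lemma \ref{lem:elliptic-reg1} for $\p^2\psib/\p x\p t$. The genuine difference is in how the strong $L^p$-continuity of $\qb$ is obtained. You argue directly from the Lagrangian formula by approximating $\qb_0$ (and $\fb(\cdot,s)$) by continuous functions and using the area-preservation of $\Phi_t$ to control the approximation error uniformly in $t$. The paper instead proceeds in two steps: the weak identity \eqref{eq:b20} gives $\qb(\cdot,\tau_2)\rightharpoonup\qb(\cdot,\tau_1)$ in $L^p$, the Lagrangian formula is used only for the one-sided norm bound $|\qb(\cdot,\tau_2)|_{L^p}\le|\qb(\cdot,\tau_1)|_{L^p}+\int_{\tau_1}^{\tau_2}|\fb|_{L^p}\,dt$, and the Radon--Riesz theorem then upgrades weak to strong convergence. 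Your route is more elementary in that it avoids Radon--Riesz; the paper's route is slightly cleaner on the forcing term, needing only measure-preservation and boundedness of $\fb$ rather than an $L^p$-approximation of $\fb$. For the classical initial/boundary conditions the paper places $\p\psib/\p x\in W^{1,p}(Q_T)$ via \eqref{eq:b67}--\eqref{eq:b68} and embeds into $C^{0,\lambda}(\overline{Q_T})$, whereas you use $\psib\in C([0,T];W^{2,p}(\M))$ and a spatial embedding; both are adequate. One minor remark: since the fixed point lies in $L^\infty(0,T;V)$, the velocity $\ub=\nabla^\perp\psib$ is quasi-Lipschitz by \eqref{eq:qlip}, so the Osgood criterion applies and $\Phi_t$ is in fact unique --- your hedge that ``several admissible flows may coexist'' is an unnecessary (though harmless) over-caution.
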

\begin{proof}
We first show that the QGPV $\qb(\cdot,t)$ is continuous in $t$ for
any $L^p(\M)$ norm with $p>1$. This improves over \eqref{wc-1} of
Lemma \ref{lem:weak-cont}. Starting from \eqref{eq:b20} and by a
well-known result (\cite{Temam2001-th}, Lemma 1.1 of Section 3.1), we
derive that, 
for some $0\leq \tau_1<\tau_2\leq T$,
  \begin{equation*}
    (\qb(\cdot,\tau_2),\,\phib) - (\qb(\cdot,\tau_1),\,\phib) =
    \int_{\tau_1}^{\tau_2} \int_\M 
    (\qb\nabla^\perp\psib \cdot \phib + \fb\cdot\phib) dx dt.
  \end{equation*}
We note that $\psib\in L^\infty(0,T; \,V)$, $q$ is bounded in
$L^\infty(Q_T)$, and $\nabla^\perp\psib$ is uniformly bounded in
$Q_T$. Thus, as $\tau_2 \longrightarrow \tau_1$, the right-hand side
vanishes, and one has
\begin{equation}
  \label{eq:b64}
  \qb(\cdot,\tau_2) \rightharpoonup \qb(\cdot,\tau_1)\qquad\textrm{in any }
  L^p(\M). 
\end{equation}

Similar to \eqref{eq:b62}, one can derive that, for $\forall\,p>1$, 
\begin{equation*}
  |\qb(\cdot,\tau_2)|_{\lpm}\leq |\qb(\cdot,\tau_1)|_\lpm +
  \int_{\tau_1}^{\tau_2} |f(\cdot,t)|_\lpm dt.
\end{equation*}
From this estimate we conclude that 
\begin{equation*}
  \overline\lim_{\tau_2\rightarrow\tau_1} |\qb(\cdot,\tau_2)|_\lpm \leq
  |\qb(\cdot,\tau_1)|_\lpm. 
\end{equation*}
By the Radon-Riesz theorem, $\qb(\cdot,\tau_2)$ converges to
$\qb(\cdot,\tau_1)$, as $\tau_2\rightarrow\tau_1$, in the strong norm of
$\lpm$. Hence, $\qb(\cdot,t)$ is continuous in $t$ in any $\lpm$, 
\begin{equation}
  \label{eq:45a}
  \qb \in \C([0,T]; \lpm),
\end{equation}
which
implies that the initial condition \eqref{eq:9} is satisfied in a
stronger norm,
\begin{equation}
  \label{eq:b65}
  \qb(\cdot,0) = q_0(\cdot) \qquad \lpm,\,\,\forall\, p> 1.
\end{equation}

Concerning the continuity of $\p^2/\p x\p t$, we derive from
\eqref{eq:6} that
\begin{equation*}
    (\Delta + L) \dfrac{\p}{\p t}\psib =  \nabla\times F -
    \nabla\cdot\left(\nabla^\perp\psib(\Delta\psib + L
      \psib)\right)\qquad \textrm{in the distribution sense}. 
\end{equation*}
Thus, formally, one has
\begin{equation*}
    \nabla \dfrac{\p}{\p t}\psib =  \nabla(\Delta+L)^{-1}\nabla\times F -
    \nabla(\Delta + L)^{-1}\nabla\cdot\left(\nabla^\perp\psi(\Delta\psib + L
      \psib)\right),  
\end{equation*}
where $(\Delta + L)^{-1}$ is the solution operator of the elliptic
boundary value problem \eqref{eq:10}. We note that $q \equiv \Delta\psib +
L\psib$ is continuous in $t$ in any $\lpm$ with $p>1$, and 
$\nabla^\perp\psib$ is uniformly bounded in $Q_T$. Thus, thanks to the
continuity of the differential operator $\nabla(\Delta -
I)^{-1}\nabla(\cdot)$, and the continuity of  $F$ with respect to $t$,
$\nabla\frac{\p\psib}{\p t}$ is continuous in $t$ in any $\lpm$ 
with $p>1$. 

By Lemma \ref{lem:elliptic-reg},
\begin{equation*}
  |\psib(\cdot,\tau_2) - \psib(\cdot,\tau_1)|_{W^{2,p}(\M)} \leq Cp
  |\qb(\cdot,\tau_2) - \qb(\cdot,\tau_1) |_\lpm. 
\end{equation*}
Thus, as $\tau_2\longrightarrow \tau_1$, 
\begin{equation*}
  \psib(\cdot,\tau_2) \longrightarrow \psib(\cdot,\tau_1)\qquad
  \textrm{in } W^{2,p}(\M),
\end{equation*}
This shows that the initial condition \eqref{eq:9} holds in $W^{2,p}(\M)$,
\begin{equation*}
  \psib(\cdot,0) = \psib_0(\cdot)\qquad\textrm{ in } W^{2,p}(\M). 
\end{equation*}

We also note that $\psib\in L^\infty(0,T; V)$ implies that
\begin{equation}
  \label{eq:b67}
  \dfrac{\p\psib}{\p x} \in L^\infty(0,T; W^{1,p}(\M)).
\end{equation}
From Lemma \ref{lem-t-deriv}, we have
\begin{equation}
  \label{eq:b68}
  \dfrac{\p^2\psib}{\p t\p x} \in L^\infty(0,T; \lpm) \subset L^p(Q_T).
\end{equation}
Combining \eqref{eq:b67} and \eqref{eq:b68}, we derive that 
\begin{equation}
  \label{eq:b69}
  \dfrac{\p\psib}{\p x} \in W^{1,p}(Q_T),\qquad\forall\, p> 1. 
\end{equation}
We take a $p > 3$. Then, by the Sobolev imbedding theorem,
\begin{equation}\label{eq:48}
  \dfrac{\p\psib}{\p x} \in C^{0,\lambda} (Q_T)\qquad \textrm{for some
  } 0 < \lambda < 1.
\end{equation}
Thus, the streamfunction $\psib$ is continuous in the spatial-temporal
domain, and the initial and boundary conditions are satisfied in the
classical sense. 

Finally, \eqref{eq:45a}, together with \eqref{eq:48}, implies that
\begin{equation*}
  \Delta\psib \in \C([0,T]; \lpm).
\end{equation*}
\end{proof}

We note that $\qb$ assumes its initial value $\qb_0$ in the
$L^p$-norm ($\forall\,p>1$), which is an improvement over
\eqref{wic1}.

\section{Concluding remarks}\label{s:conclude}
As far as model complexity is concerned, the suite of QG models sit
between the purely planar two-dimensional Euler/Navier-Stokes
equations and the fully three-dimensional Euler/Navier-Stokes
equations. Even though the three-dimensional QG equation
is posed over a three-dimensional spatial domain, the velocity vector
at every point in the space is assumed to be horizontal,
i.e.~two-dimensional. The theory on the two-dimensional planar flow is
rather complete, see, among many other references,
\cite{Yudovich1963-bj, Bardos1972-ar, Ladyzhenskaya1969-lp}. On the
other hand, the theory about the three-dimensional Euler or NSEs is
rather incomplete. For example, it remains an open question whether
the three-dimensional NSEs is globally well-posed (\cite{Temam2001-th,
  Constantin1993-jl}. The situation with the inviscid model (3D Euler)
is generally worse (\cite{Bardos2007-cf, Constantin2007-ob,
  Beale1984-we, Castro2016-gw}). It is then natural to ask how the QG equations
fare as far as the well-posedness is concerned. The global
well-posedness of the single-layer barotropic QG has been established
in a previous study (\cite{Chen2017-fh}). The current work deals with
the multi-layer QG equations. The multi-layer QG can be viewed as a
stack of single-layer barotropic equations. The layers interact with
each other through pressure. The layer interactions add some vertical
variations to the model, and move it one step closer to the full
three-dimensional fluid model. The added vertical variations make the
well-posedness issue more interesting and more challenging at the same
time.  Notwithstanding the technical challenges, the inviscid
multi-layer QG model is shown 
to be globally well-posed.
In what follows, we briefly review the challenge and the general
approach of the present work.

The presence of multiple layers and the interactions among them are
dealt with by a combination of two techniques: mode decomposition and
straightforward estimation. When the positive (or negative)
definiteness is needed, a decomposition in the barotropic and
baroclinic modes, which are the eigenmodes of the associated
Sturm-Liouville problem, is employed. In other places, i.e.~within the
nonlinear terms, a straightforward estimation is carried out on the
streamfunctions across all layers. The last technique works, of
course, thanks to the finiteness in the number of layers and the
consequential boundedness in the thickness of the layers. 

Concerning the general approach of the analysis, in establishing the
global well-posedness of the barotropic QG 
equation, Chen \cite{Chen2017-fh} follows \cite{Marchioro1994-yt}, and
employs the Picard iterations to prove the existence and uniqueness of
the flow map and the convergence of the iterative scheme. The downside
of this approach is that it has a higher regularity requirement on
the right-hand side forcing, which needs to be uniformly continuous in
space. Yudovich \cite{Yudovich1963-bj} establishes the convergence of
the iterative scheme by the Schauder fixed-point theorem, and the
right-hand side forcing is only assumed to be essentially bounded. In
order to remove the somewhat stringent requirement on the forcing in
\cite{Chen2017-fh}, the current work adopts the approach of
\cite{Yudovich1963-bj}, but with one modification. The existence of a
solution to the linearized problem is established by a flow map
corresponding to a continuous velocity field, instead of the
regularization technique employed by Yudovich. The flow map of the
merely continuous velocity field is not unique, and therefore the
solution to the linearized problem is not unique either
(\cite{DiPerna1989-fg, Alberti2011-hm, Alberti2014-hc}). Fortunately,
the uniqueness is not required by the Schauder fixed-point
theorem. The uniqueness of the weak solution to the nonlinear QG
equation has been established by the {\itshape a priori} estimates.

The next target for the current project is naturally the
three-dimensional QG equation. The equation represents a giant step
from the barotropic or the multi-layer QG equations. It is a
three-dimensional model with an infinite number of degrees of freedom
in the vertical direction. The techniques of the current and the
previous work (\cite{Chen2017-fh}), adept at treating a single layer,
or a finite number of layers, will probably be inadequate for the
three-dimensional model. It is expected that new techniques will have
to be introduced or invented. Progresses in this regard will be
reported elsewhere.

\section*{Acknowledgment}

\bibliographystyle{AIMS}
\bibliography{references}
\end{document}